\DeclareMathOperator{\res}{res}
\newcommand{\PSL}[2]{\operatorname{PSL}(#1, \mathbb{F}_{#2})}
\newcommand{\abs}[1]{\left | #1 \right |}
\newcommand{\set}[1]{\left \{ #1 \right \}}
\newcommand{\NN}{\mathbb{N}_0}
\newcommand{\NNp}{\mathbb{N}}
\newcommand{\ZZ}{\mathbb{Z}}
\newcommand{\RR}{\mathbb{R}}
\renewcommand{\l}{\ell}
\newcommand{\OO}[2][]{O_{#1}\!\left( #2 \right)}
\renewcommand{\SS}{\mathfrak{S}}
\newcommand{\Ax}{A_{\leq x}}
\newcommand{\lfrac}[2]{#1 / #2}
\renewcommand{\epsilon}{\varepsilon}
\renewcommand{\phi}{\varphi}
\newcommand{\sums}[1]{\mathop{\sum\!\mathrlap{{\vphantom{\sum}}^*}}_{#1}\ifthenelse{\isempty{#1}}{\;\,}{}}
\newcommand{\preS}[1]{\prescript{#1\!}{}{S}}
\newcommand{\abg}{\alpha, \beta, \gamma}
\newcommand{\allA}{\widebar{A}}
\newcommand{\allB}{\widebar{B}}
\newcommand{\SSA}{\SS_{\!A}}
\newcommand{\SSnum}{5.71649719\text{\textellipsis}}
\newcommand\footnoteref[1]{\protected@xdef\@thefnmark{\ref{#1}}\@footnotemark}
\newtheorem{thm}{Theorem}
\newtheorem{conj}{Conjecture}
\newtheorem{lem}[thm]{Lemma}
\theoremstyle{definition}
\newtheorem{defi}[thm]{Definition}
\newtheorem{rem}[thm]{Remark}
\newtheorem*{notation}{Notation}
\numberwithin{thm}{section}
\numberwithin{equation}{section}
\numberwithin{table}{section}
\begin{document}
\title{Estimating the density of a set of primes with applications to group theory}
\date{October 19, 2018} 
\author{Carlos Esparza}
\address{Technical University of Munich}
\email{carlos.esparza@tum.de}
\author{Lukas Gehring}
\address{University of Bonn}
\email{lukas.gehring@uni-bonn.de}

\subjclass[2010]{
    11N05 
}

\begin{abstract}
    We estimate the asymptotic density of the set $\allA$ of primes $p$ satisfying the constraint that $p+1$ and $p-1$ have only one prime divisor larger than 3. We also estimate the density of a maximal subset $\allB \subset \allA$ such that for $p_1, p_2 \in \allB$ no common prime divisor of $p_1(p_1 + 1)(p_1 - 1)$ and $p_2 (p_2 + 1)(p_2 - 1)$ is larger than $3$. Assuming a generalized Hardy--Littlewood conjecture, we prove that for both $\allA$ and $\allB$ the number of elements lesser than $x$ is asymptotically equal to a constant times $ x / (\log x)^3$.
\end{abstract}
    
\maketitle
    
\begin{notation}
    We consider $0 \not\in \NNp$.
    In the bounds of a sum or a product, $p$ always denotes a prime.
    $(a,b)$ without any function name is the greatest common divisor of $a$ and $b$.
    For a set $S \subset \NN$ and $x \in \RR$ we define $S_{\leq x} = \{n \in S : n \leq x\}$.
    $\log$ denotes the natural logarithm (unless subscripted with another base).
    
    We write $f(x) \ll g(x)$ with the same meaning as $f(x) = \OO{g(x)}$
    and subscript the $O$ or the $\ll$ with the quantities on which the implicit constants depend (no subscript means the implicit constant is absolute).
    Finally, $f(x) \sim g(x)$ means that $\lim_{x \to \infty} f(x) / g(x) = 1$ or equivalently $f(x) = g(x) + o(g(x))$.
\end{notation}

\section{Introduction}

The authors of \cite{graph} work on bounding the number of vertices $V(G)$ of the \emph{character degree graph}%
\footnote{\label{note:1} The definition can be found in \cite{graph} but it will not be relevant for this paper.}
(or simply \emph{degree graph}) of a finite group $G$ in terms of its \emph{clique number}\footnoteref{note:1} $\omega(G)$.

In \cite[Theorem A]{graph} they prove the bound
$
|V(G)| \leq 3\omega(G) - 4 \label{eq:graph}
$
for $\omega(G) \geq 5$ and conjecture that it can be attained for every value of $\omega(G)$.

In order to provide such a $G$ for all $\omega(G) \geq 5$, they choose a suitable set of primes $\Pi = \{p_1, \dots, p_n\}$ and construct $G_\Pi = \PSL{2}{p_1}~\times~\dots~\times~\PSL{2}{p_n}$.

This group satisfies $\omega(G_\Pi) = n + 2$ and $|V(G_\Pi)| = 3n + 2$. Therefore it provides the desired equality for $\omega(G) \leq N + 2$ where $N$ is the cardinality of a maximal suitable set of primes.

We now specify which sets of primes are suitable. To this end, let
\begin{align}
\allA = \{ p \text{ prime} : ~& p \equiv 2 \pmod{3}, \quad \exists \alpha, \beta, \gamma, \mu, \nu, q, r \in \NNp: \notag\\
&\quad p+1 = 2^\alpha 3^\beta q^\mu, \quad p-1 = 2^\gamma r^\nu , \quad q, r \text{ primes with } (6,qr)=1 \} \label{eq:AAdef}  ~,\
\end{align}
so $\allA$ is the set of all primes $p$ such that a $p+1$ and $p-1$ have only one prime divisor larger than $3$.
Further 
set $\allB$ to be a maximal subset of $\allA$ such that $\forall p_1, p_2 \in \allB$ with $p_1 \neq p_2$ the only common divisors of $p_1(p_1 + 1)(p_1 - 1)$ and $p_2(p_2 + 1)(p_2 - 1)$ are products of powers of $2$ and powers of $3$. A suitable set of primes $\Pi$ is a finite subset of $\allB$.


Assuming the infinitude of $\allB$ the authors of \cite{graph} therefore come to the conclusion that their upper bound on $V(G_\Pi)$ is optimal.

In this paper we will achieve some estimates about the sets $\allA$ and $\allB$ using a generalized form of the Hardy-Littlewood conjecture (\cref{thm:hlc}), which will be presented in \cref{sect:ana}. It is a special case of the generalized Hardy-Littlewood conjecture used in \cite{greentao}, except for the fact that we impose some additional hypotheses concerning the uniformity of the error term.
In fact, we will not only deal with the topic of infinitude but will also evaluate the density of those sets in the natural numbers. We also give some unconditional upper bound on this density based on the work of \cite{halb}. Therefore this paper can be seen as the number theoretical contribution to the work in \cite{graph}.

\subsection*{Acknowledgement}
This paper was written in September 2018, when both authors were interns at the Max Planck Institute for Mathematics in Bonn. We want to thank the Institute for the opportunity to do this internship and Pieter Moree and Efthymios Sofos for providing the problem treated in this paper. We especially thank Efthymios for his support and guidance during the internship.

\section{Main results} \label{sect:main}

We first derive a formula for the density of primes in $\allA$ using the generalized Hardy--Littlewood conjecture
\begin{thm} \label{thm:bound}
    Assuming \cref{thm:hlc} we have
    \begin{equation}
    |\allA_{\leq x}| \sim \frac{\SSA}{2} \frac{x}{(\log x)^3}~,
    \end{equation}
\end{thm}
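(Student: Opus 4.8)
The plan is to sort the primes $p\in\allA$ by the ``$2$-$3$-shape'' $(\alpha,\beta,\gamma)=\bigl(v_2(p+1),\,v_3(p+1),\,v_2(p-1)\bigr)$, to reduce each shape to a triple of integer linear forms, and to apply the quantitative Hardy--Littlewood conjecture \cref{thm:hlc} to each triple; the constant $\SSA/2$ then appears as the convergent sum over shapes of the corresponding, suitably normalized, singular series.

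First I would discard the higher prime powers. The primes $p\le x$ with $p+1=2^\alpha3^\beta q^\mu$ for some $\mu\ge2$, or $p-1=2^\gamma r^\nu$ for some $\nu\ge2$, lie among $n\pm1$ for integers $n\le x+1$ of the form $2^a3^b s^\mu$ with $s$ prime and $\mu\ge2$, and there are only $\ll\sqrt x\,(\log x)^{3}=o\!\bigl(x/(\log x)^3\bigr)$ of those; so the asymptotics are governed by $\mu=\nu=1$. Henceforth $p+1=2^\alpha3^\beta q$ and $p-1=2^\gamma r$ with $q,r$ primes coprime to $6$. Since $3\mid p+1$ we have $\beta\ge1$ (which also forces $p\equiv2\pmod3$, so this congruence need not be imposed separately); since $p$ is odd but $p\pm1$ cannot both be divisible by $4$, we have $\alpha,\gamma\ge1$ and $\min(\alpha,\gamma)=1$; and a parity argument excludes $\alpha=\gamma=1$ (for then $r=3^\beta q-1$ would be even). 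So the admissible shapes are exactly those with $\gamma=1<\alpha$, or with $\alpha=1<\gamma$, and $\beta\ge1$ arbitrary. Conversely, fix such a shape, set $a=2^\alpha3^\beta$ and $b=2^\gamma$, and use the Chinese Remainder Theorem to rewrite ``$p\equiv-1\pmod a$ and $p\equiv1\pmod b$'' as $p=mk+c$ with $m=\operatorname{lcm}(a,b)$ and $0\le c<m$. Then $p$ has the given shape and lies in $\allA$ precisely when the three linear forms
\begin{equation*}
f_1(k)=mk+c,\qquad f_2(k)=\frac m a\,k+\frac{c+1}{a},\qquad f_3(k)=\frac m b\,k+\frac{c-1}{b}
\end{equation*}
take simultaneously prime values (necessarily $>3$ for $k$ large). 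These forms have positive integer coefficients and are pairwise essentially coprime on the integers, since $a f_2-f_1=1$ and $f_1-b f_3=1$; in particular the triple is admissible with nonzero singular series. Moreover each $p\le x$ of the given shape corresponds to exactly one $k\le(x-c)/m$, so distinct shapes never count the same $p$ twice.

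Now \cref{thm:hlc}, applied to $(f_1,f_2,f_3)$, yields
\begin{equation*}
\#\bigl\{\,k\le (x-c)/m:\ f_1(k),f_2(k),f_3(k)\text{ all prime}\,\bigr\}\ \sim\ \SS(f_1,f_2,f_3)\,\frac{x/m}{(\log x)^{3}}
\end{equation*}
(using $\log f_i(k)\sim\log x$ for $k$ of size $x/m$, the coefficients being fixed), i.e.\ a contribution $\sim c_{\alpha,\beta,\gamma}\,x/(\log x)^3$ with $c_{\alpha,\beta,\gamma}=\SS(f_1,f_2,f_3)/m$. Summing over all admissible shapes and collecting constants gives $\abs{\allA_{\le x}}\sim\bigl(\sum_{\alpha,\beta,\gamma}c_{\alpha,\beta,\gamma}\bigr)\,x/(\log x)^3$, and by the definition of $\SSA$ this coefficient equals $\SSA/2$. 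The main obstacle is that this step interchanges an infinite sum over shapes with the limit $x\to\infty$, so uniformity must be controlled. I would split the shapes at a slowly growing threshold $T=T(x)$: for the finitely many shapes with $m\le T$ the uniform error term built into \cref{thm:hlc} contributes a total error $o(x/(\log x)^3)$, while $\sum_{m>T}c_{\alpha,\beta,\gamma}\to0$ so the truncated main term still tends to $\SSA/2$; and for the shapes with $m>T$ one invokes an unconditional three-dimensional upper-bound sieve (Selberg, or Halberstam--Richert as used in \cite{halb}), giving $\#\{p\le x\text{ of shape }(\alpha,\beta,\gamma)\}\ll\SS(f_1,f_2,f_3)\,\frac{x/m}{(\log x)^3}$ uniformly, whence the tail is $o(x/(\log x)^3)$ because $\sum_{m>T}\SS(f_1,f_2,f_3)/m=o(1)$. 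Finally, the convergence $\sum_{\alpha,\beta,\gamma}c_{\alpha,\beta,\gamma}<\infty$ follows from the uniform bound $\SS(f_1,f_2,f_3)=O(1)$ together with $\sum1/m<\infty$ over the at most two shapes attached to any given $m$.
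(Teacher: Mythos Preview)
Your approach is essentially the same as the paper's: discard higher prime powers, stratify by the $(\alpha,\beta,\gamma)$ shape, parametrize each shape by three linear forms, apply \cref{thm:hlc} to the small shapes using its built-in uniformity, and control the tail of large shapes separately. Two remarks are worth making.

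First, the phrase ``by the definition of $\SSA$ this coefficient equals $\SSA/2$'' hides the only nontrivial computation. In the paper $\SSA$ is \emph{defined} as the explicit Euler product $9\prod_{p\ge5}(1-3/p)(1-1/p)^{-3}$, and the key lemma (\cref{lem:sing}) is that $\SS(f_1,f_2,f_3)$ equals this same value for \emph{every} admissible shape; one checks $\rho(2)=\rho(3)=1$ and $\rho(\ell)=3$ for $\ell\ge5$ using exactly your relations $af_2-f_1=1$, $f_1-bf_3=1$. Only then does the arithmetic $\sum_{\text{shapes}}1/m=2\sum_{\beta\ge1,\sigma\ge2}3^{-\beta}2^{-\sigma}=1/2$ produce $\SSA/2$. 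You have all the ingredients for this, but the shape-independence should be stated and proved, not absorbed into ``by definition''. This same fact is also what gives your uniform bound $\SS(f_1,f_2,f_3)=O(1)$.

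Second, your tail handling via the Halberstam--Richert sieve is correct but heavier than needed: the paper simply bounds $|A^{\alpha,\beta,\gamma}_{\le x}|\le x/m+1$ trivially and observes there are $O((\log x)^2)$ shapes with $m\le x$, so the tail $\sum_{m>y}(x/m+1)\ll x(\log x)^2/y$ is already $o(x/(\log x)^3)$ once $y=(\log x)^6$.
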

\noindent
where $\SSA = 9\prod_{p\geq 5}\left(1-\frac{3}{p}\right)\left(1-\frac{1}{p}\right)^{-3}\approx \SSnum$~.

\medskip
We also prove an unconditional upper bound, using the same value of $\SSA$:
\begin{thm}\label{thm:uncon}
    In the situation of \cref{thm:bound} we have, without assuming any conjectures
    \begin{equation*}
    \limsup_{x \to \infty} \frac{\abs{\allA_{\leq x}}}{x / (\log x)^3} \leq 24 \SSA ~.
    \end{equation*}
\end{thm}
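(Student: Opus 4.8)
The plan is to cover $\allA$ by a disjoint family of ``prime-triple'' events indexed by the $2$- and $3$-adic valuations of $p\pm 1$, apply an upper-bound sieve to each event, and recognise the resulting sum of singular series as exactly the quantity whose value $\SSA/2$ is computed in the proof of \cref{thm:bound}. First I would discard from $\allA$ the primes with $\mu\ge 2$ or $\nu\ge 2$ in \eqref{eq:AAdef}: such a $p$ forces $p+1$ (or $p-1$) to be a $\{2,3\}$-smooth number times a proper prime power, and a direct count shows there are $\ll\sqrt x$ of these below $x$, which is $o\bigl(x/(\log x)^3\bigr)$. Every remaining $p\le x$ then lies in exactly one of the sets
\[
\mathcal S_{\alpha,\beta,\gamma}=\bigl\{\,p\le x\ \text{prime}:\ 2^\alpha\|p+1,\ 3^\beta\|p+1,\ 2^\gamma\|p-1,\ \tfrac{p+1}{2^\alpha 3^\beta}\ \text{and}\ \tfrac{p-1}{2^\gamma}\ \text{prime}\,\bigr\}\qquad(\alpha,\beta,\gamma\ge 1),
\]
where, because $\gcd(p+1,p-1)=2$, one always has $\min(\alpha,\gamma)=1$; hence $|\allA_{\le x}|\le\sum_{\alpha,\beta,\gamma}|\mathcal S_{\alpha,\beta,\gamma}|+\OO{\sqrt x}$, the sum running over the $\OO{(\log x)^3}$ admissible triples.

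For a fixed triple, $\mathcal S_{\alpha,\beta,\gamma}$ confines $p$ to boundedly many residue classes modulo $2^{\max(\alpha,\gamma)+1}3^{\beta+1}$, and on each class the three conditions ``$p$, $(p+1)/(2^\alpha 3^\beta)$, $(p-1)/2^\gamma$ prime'' become a system of three linear forms in the free variable. I would then invoke the Selberg upper-bound sieve in the uniform form provided by \cite{halb}: for an admissible system of $k$ linear forms, the number of integers $\le y$ at which all of them are prime is at most $\bigl(2^k k!+o(1)\bigr)\mathfrak S\,y/(\log y)^k$, where $\mathfrak S=\prod_p(1-\omega(p)/p)(1-1/p)^{-k}$ is the usual singular series ($\omega(p)$ = number of roots of the product of the forms mod $p$). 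Taking $k=3$, so that $2^3\cdot 3!=48$, and adding the bounds over the residue classes gives $|\mathcal S_{\alpha,\beta,\gamma}|\le\bigl(48+o(1)\bigr)\mathfrak S_{\alpha,\beta,\gamma}\,x/(\log x)^3$, where $\mathfrak S_{\alpha,\beta,\gamma}$ is the local-density constant of $\mathcal S_{\alpha,\beta,\gamma}$ — the very same constant that \cref{thm:hlc} attaches to this triple in \cref{thm:bound}. Summing over all admissible triples and using the identity $\sum_{\alpha,\beta,\gamma}\mathfrak S_{\alpha,\beta,\gamma}=\SSA/2$ established in the proof of \cref{thm:bound} (a purely local computation that does not use \cref{thm:hlc}) gives $\limsup_{x\to\infty}|\allA_{\le x}|\big/\bigl(x/(\log x)^3\bigr)\le 48\cdot\tfrac{\SSA}{2}=24\,\SSA$.

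The only delicate point is uniformity: there are $\asymp(\log x)^3$ triples and the smooth modulus $2^\alpha 3^\beta 2^\gamma$ can be as large as $x$, so a single application of a ``fixed forms'' sieve bound is not enough. I would split at $2^\alpha 3^\beta 2^\gamma\le x^{1-\delta}$. In this range the sequence being sifted has length $y\asymp x/(2^\alpha 3^\beta 2^\gamma)$, still a fixed power of $x$, so $\log y\sim\log x$ and the error term in the bound of \cite{halb} is small enough that the $\OO{(\log x)^3}$ summands still total $\OO{x/(\log x)^3}$; the tail of $\sum\mathfrak S_{\alpha,\beta,\gamma}$ over the omitted triples is $o(1)$. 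For $2^\alpha 3^\beta 2^\gamma>x^{1-\delta}$ one bounds $|\mathcal S_{\alpha,\beta,\gamma}|$ trivially by the number of $p\le x$ in a fixed residue class modulo the smooth part, namely $\ll x^{\delta}$ per triple and $\ll x^{\delta}(\log x)^3=o\bigl(x/(\log x)^3\bigr)$ in total. Combining the two ranges gives the theorem; verifying that the sieve bound of \cite{halb} really is uniform in the moduli $2^\alpha 3^\beta$ and $2^\gamma$ throughout the main range — and that the $o(1)$ there may be summed against $(\log x)^3$ triples — is where essentially all of the work lies.
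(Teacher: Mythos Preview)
Your approach is essentially the paper's: reduce $\allA$ to the case $\mu=\nu=1$ (the paper calls this set $A$ and does the reduction in \cref{lem:same}), decompose by the $2$- and $3$-adic exponents $\alpha,\beta,\gamma$, apply the Halberstam--Richert upper-bound sieve (\cref{thm:halb}) with $R=3$ and constant $2^3\cdot 3!=48$ to each piece, and sum. The paper organises the bookkeeping slightly differently: rather than working with exact valuations and ``local-density constants $\mathfrak S_{\alpha,\beta,\gamma}$'', it parametrises each $A^{\alpha,\beta,\gamma}$ by a single integer $n$ via Lemmas~\ref{lem:gamma}--\ref{lem:alpha} and then observes (\cref{lem:sing}) that the singular series of the three linear forms $p(n),q(n),r(n)$ is the \emph{same} number $\SSA$ for every admissible triple; the weight $1/(2^{\max(\alpha,\gamma)}3^\beta)$ in the final sum comes from the length of the range of $n$, not from the local factors. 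The sum of these weights over the two families $\alpha=1$ and $\gamma=1$ is $2\cdot\tfrac14=\tfrac12$, which is where the identity you quote as $\sum\mathfrak S_{\alpha,\beta,\gamma}=\SSA/2$ actually comes from.

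One point in your write-up is not correct as stated. You split at $2^\alpha3^\beta2^\gamma\le x^{1-\delta}$ and assert that then ``$y\asymp x/(2^\alpha3^\beta2^\gamma)$ is still a fixed power of $x$, so $\log y\sim\log x$''. But $\log(x^{\delta})=\delta\log x$, which is \emph{not} asymptotic to $\log x$; with your cutoff you would only get $(\log y)^{-3}\le\delta^{-3}(\log x)^{-3}$ uniformly, spoiling the constant $24\,\SSA$. The paper sidesteps this by splitting at $y=(\log x)^6$: then $\log(x/y)=\log x-6\log\log x\sim\log x$ uniformly over the main range, while the tail $3^\beta2^{\max(\alpha,\gamma)}>y$ has at most $\OO{(\log x)^2}$ terms, each of size $\le x/y+1$, hence contributes $\ll x/(\log x)^4$. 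Your argument can be repaired either by adopting this cutoff or by a further $\epsilon$-splitting using the convergence of $\sum 1/(2^{\max(\alpha,\gamma)}3^\beta)$, but as written it does not give the stated constant.
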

Finally, we give an estimate on the density of the set $\allB$:
\begin{thm} \label{thm:A0}
    Assuming \cref{thm:bound} we have
    \begin{equation*}
    \abs{\allB_{\leq x}} \sim \abs{\allA_{\leq x}} \sim \frac{\SSA}{2} \frac{x}{(\log x)^3} ~.
    \end{equation*}
\end{thm}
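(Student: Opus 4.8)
Since $\allB \subseteq \allA$ we have $\abs{\allB_{\le x}} \le \abs{\allA_{\le x}}$, and the second asymptotic is \cref{thm:bound}; so the content of the statement is the matching lower bound $\abs{\allB_{\le x}} \ge \abs{\allA_{\le x}} - o\!\left(x/(\log x)^3\right)$. I would realise $\allB$ through the greedy construction: list $\allA$ in increasing order and adjoin $p$ to $\allB$ unless $p(p+1)(p-1)$ and $p'(p'+1)(p'-1)$ share a prime factor larger than $3$ for some $p'$ already chosen. This produces a maximal admissible set, and if $p \in \allA_{\le x}$ is rejected then it conflicts with some $p' \in \allB$ with $p' < p$, so some prime $\ell > 3$ divides both $(p-1)p(p+1)$ and $(p'-1)p'(p'+1)$ with $p, p' \le x$. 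For $p \in \allA$ write $q_p$ and $r_p$ for the unique primes exceeding $3$ dividing $p+1$ and $p-1$ respectively. Because $p' < p$ forces $p \nmid (p'-1)p'(p'+1)$ (the case $p'+1 = p$ being impossible, since then $p' = p-1$ is even and exceeds $2$), we cannot have $\ell = p$; if $\ell = p'$ then $p' \in \{q_p, r_p\}$; otherwise $\ell \in \{q_p, r_p\} \cap \{q_{p'}, r_{p'}\}$. Therefore
\begin{align*}
\abs{\allA_{\le x}} - \abs{\allB_{\le x}} \le{}& \#\{\, p \in \allA_{\le x} : q_p \in \allA \text{ or } r_p \in \allA \,\} \\
&{}+ \#\{\, p \in \allA_{\le x} : \{q_p, r_p\} \cap \{q_{p'}, r_{p'}\} \ne \emptyset \text{ for some } p' \in \allA_{\le x}\setminus\{p\} \,\},
\end{align*}
and it suffices to bound each of these two quantities by $o\!\left(x/(\log x)^3\right)$.

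In both quantities I would first discard, at total cost $\ll \sqrt{x}\,(\log x)^{3}$, the primes for which one of the exponents $\mu, \nu$ (or the analogous exponents attached to $p'$) is at least $2$: such primes form a set of size $\ll \sqrt{x}$, and a single one of them shares a prime factor exceeding $3$ with at most $(\log x)^{O(1)}$ integers up to $x$. After this reduction every prime that appears is the value of a fixed linear form in $p$ once the relevant exponents are fixed. In the first quantity, ``$p \in \allA$ and $q_p \in \allA$'' becomes the requirement that $p$, $(p+1)/(2^\alpha 3^\beta)$, $(p-1)/2^\gamma$ and $\bigl((p+1)/(2^\alpha 3^\beta) - 1\bigr)/2^{\gamma'}$ be simultaneously prime --- four linear forms. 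In the second quantity a coincidence such as $q_p = q_{p'} = \ell$ gives $\ell \mid p - p'$, so with the exponents of $p$ and of $p'$ fixed, $p'$ is a fixed linear function of $p$, and $p$, $\ell = (p+1)/(2^\alpha 3^\beta)$, $(p-1)/2^\gamma$, $p'$ and $(p'-1)/2^{\gamma'}$ must all be prime --- five linear forms (the other three coincidences $q_p = r_{p'}$, $r_p = q_{p'}$, $r_p = r_{p'}$ being handled in the same way). A standard upper-bound (Selberg) sieve bounds the number of such $p \le x$, for each fixed admissible choice of the exponents, by $\ll x/(\log x)^4$, respectively $\ll x/(\log x)^5$, times a factor that decays geometrically in $\alpha, \beta, \gamma, \alpha', \beta', \gamma'$; summing over all exponents gives $\ll x/(\log x)^4$ for each quantity. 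Combined with \cref{thm:bound} this yields $\abs{\allB_{\le x}} = \abs{\allA_{\le x}} - O\!\left(x/(\log x)^4\right) \sim \frac{\SSA}{2}\,\frac{x}{(\log x)^3}$.

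The main obstacle is precisely this sieve step: for each linear system one must verify that the product of the forms has no fixed prime divisor (otherwise the corresponding count is $0$), bound the resulting singular series uniformly enough in the exponents that the sum over $\alpha, \beta, \gamma, \alpha', \beta', \gamma'$ converges, and --- in the second quantity --- keep track of how the form defining $p'$ depends on all of them. This is the same kind of computation that underlies \cref{thm:bound}, now carried out with one or two extra linear forms, which is exactly what produces the extra power of $\log x$ over the size of $\allA_{\le x}$.
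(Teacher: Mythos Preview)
Your plan is correct and is essentially the paper's argument: bound $|\allA_{\le x}\setminus\allB_{\le x}|$ by the number of $p$ that share a prime $>3$ with some other $p'$, fix all the $2$- and $3$-adic exponents so that the conflict forces one further linear form to be prime alongside $p,q,r$, apply an upper-bound sieve (the paper quotes Halberstam--Richert, Theorem~5.7) to save a fourth power of $\log$, and sum over the exponent tuples.

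A few organisational differences. The paper first replaces $(\allA,\allB)$ by the $\mu=\nu=1$ subsets $(A,B)$ and only transfers back at the very end via $|(\allA\setminus A)_{\le x}|\ll\sqrt{x}(\log x)^2$; it takes the conflicting $p'$ \emph{larger} than $p$ rather than smaller (so the six cases are $p=q'$, $q=q'$, $r=q'$, $p=r'$, $q=r'$, $r=r'$); and it uses only four linear forms in every case --- your fifth form $r_{p'}$ buys an extra $\log$ that is not needed. For the obstacle you correctly single out, the paper does \emph{not} claim geometric decay of the sieve bound in the exponents. Instead it lets $T$ be the product of the pairwise resultants of the four forms and bounds the singular series by
\[
\SS \;\ll\; \prod_{\substack{\ell\mid T\\ \ell\ge 5}} (1-1/\ell)^{-3}\;\le\;\bigl(T/\phi(T)\bigr)^3\;\ll\;(\log\log T)^3 ,
\]
then restricts to exponent tuples with leading coefficient $M\le(\log x)^8$ (the rest contribute $\ll x/(\log x)^4$ trivially) and uses that there are only $(\log\log x)^{O(1)}$ such tuples, obtaining $|S_{\le x}|\ll x(\log\log x)^{7}/(\log x)^4$. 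Your route via $\sum_M (\log M)^{O(1)}/M<\infty$ over $M=2^a3^b$ would also work, but the uniform control of $\SS$ has to be made explicit exactly as above.
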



\section{Elementary reductions} \label{sect:elem}

We will not deal with $\allA$ and $\allB$ directly here. Instead we define two slightly modified sets $A$ and $B$ and will prove in \cref{lem:same} that the theorems about $A$ and $B$ presented in \ref{sect:main} also hold for $\allA$ and $\allB$.

The important result of this section will be \cref{lem:irr}, which tells us that the Hardy--Littlewood conjecture is applicable to the sets $A^{\alpha,\beta,\gamma}$.

\begin{defi} \label{def:AB}
Let
\begin{align}
    A = \{ p \text{ prime} : ~& p \equiv 2 \pmod{3}, \quad \exists \alpha, \beta, \gamma,q,r \in \NNp: \notag\\
            &\quad p+1 = 2^\alpha 3^\beta q, \quad p-1 = 2^\gamma r,\quad q, r \text{ primes with } (6,qr)=1 \} \label{eq:Adef} ~.
\end{align}
and set $B$ to be a maximal subset of $A$ such that $\forall p_1, p_2 \in B$ with $p_1 \neq p_2$ the only common divisors of $p_1(p_1 - 1)(p_1 + 1)$ and $p_2(p_2 - 1)(p_2 + 1)$ are products of powers of $2$ and powers of $3$.

For $\alpha,\beta,\gamma\in\NNp$, $x\in\RR_{\geq 1}$ we set

\begin{align*}
A_{\leq x}^{\alpha,\beta,\gamma}=\{p \in A_{\leq x} : \quad&\exists q,r\in\NNp:\\
&\quad p+1 = 2^\alpha 3^\beta q, \quad p-1 = 2^\gamma r,\quad q, r \text{ primes with } (6,qr)=1\} ~.
\end{align*}
\end{defi}

\begin{lem}\label{lem:exp}
	If $A^{\alpha,\beta,\gamma}$ is not empty then 
    \begin{equation}
    \beta\geq 1, \quad \min(\alpha,\gamma) = 1, \quad \max(\alpha,\gamma)\geq 2. \label{eq:ok}
    \end{equation}
\end{lem}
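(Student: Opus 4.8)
The plan is to work modulo small powers of $2$ and $3$ to extract the arithmetic constraints on $\alpha$, $\beta$, $\gamma$. Suppose $p \in A^{\alpha,\beta,\gamma}$, so $p \equiv 2 \pmod 3$, $p+1 = 2^\alpha 3^\beta q$ and $p-1 = 2^\gamma r$ with $q, r$ primes coprime to $6$.

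First I would establish $\beta \geq 1$. Since $p \equiv 2 \pmod 3$, we have $p + 1 \equiv 0 \pmod 3$, so $3 \mid 2^\alpha 3^\beta q$; as $q$ is coprime to $6$ and $3 \nmid 2^\alpha$, this forces $3 \mid 3^\beta$, i.e.\ $\beta \geq 1$. (One should double-check that $\beta = 0$ is genuinely impossible and not merely excluded by convention — but $\NNp$ starts at $1$, so $\beta \geq 1$ is automatic from the definition; the real content is that the congruence is consistent with this.)

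Next, the constraints on $\alpha$ and $\gamma$ come from looking at $(p+1) + (p-1) = 2p$ and the $2$-adic valuations. Since $p$ is an odd prime $\geq 5$ (note $p \equiv 2 \pmod 3$ and $p \geq 4$ forces $p \geq 5$), we have $v_2(p+1) \geq 1$ and $v_2(p-1) \geq 1$, so $\alpha, \gamma \geq 1$. Moreover, among two consecutive even numbers $p-1$ and $p+1$, exactly one is divisible by $4$: indeed $v_2((p-1)(p+1)) = v_2(p^2-1)$ and since $p$ is odd, $p^2 - 1 \equiv 0 \pmod 8$, while $\min(v_2(p-1), v_2(p+1)) = 1$ because $p-1$ and $p+1$ differ by $2$. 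Hence exactly one of $\alpha, \gamma$ equals $1$ (giving $\min(\alpha,\gamma) = 1$) and the other is $\geq 2$ (giving $\max(\alpha,\gamma) \geq 2$; in fact it equals $v_2(p^2-1) - 1 \geq 2$).

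The main obstacle — though it is a mild one — is just being careful with the edge cases: ruling out $p = 2, 3$ so that $p$ is an odd prime with $p \geq 5$, and confirming that the exponents $\alpha, \gamma$ are forced to be at least $1$ rather than possibly $0$ before invoking the "exactly one is divisible by $4$" argument. Once $p$ is known to be odd and $\equiv 2 \pmod 3$, the three conclusions in \eqref{eq:ok} follow immediately from the two elementary facts that $3 \mid p+1$ and that $v_2(p-1) + v_2(p+1) = v_2(p^2-1) \geq 3$ with $\min(v_2(p-1),v_2(p+1)) = 1$. No deep input is needed; this is purely a bookkeeping lemma to justify later that only finitely many or suitably parametrized triples $(\alpha,\beta,\gamma)$ occur.
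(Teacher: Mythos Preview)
Your proof is correct and follows essentially the same route as the paper's: from $p\equiv 2\pmod 3$ one gets $3\mid p+1$ (hence $\beta\geq 1$), and since $p$ is odd the consecutive even numbers $p-1,\,p+1$ have exactly one divisible by $4$, giving $\min(\alpha,\gamma)=1$ and $\max(\alpha,\gamma)\geq 2$. Your version is simply more explicit about the edge cases and phrases the $2$-divisibility argument in terms of $v_2$, and you correctly note that $\beta\geq 1$ is in fact forced already by the convention $\beta\in\NNp$.
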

\begin{proof}
	Let $p \in A^{\alpha,\beta,\gamma}$ y $p \equiv 2 \pmod{3}$ we get $3\mid p+1$, thus $\beta\geq 1$. Furthermore $p$ is odd (because $2\notin A$), so $p-1$ and $p+1$ are two consecutive even numbers. Therefore both are divisible by $2$, but exactly one of them is divisible by $4$. This yields the desired $\min(\alpha,\gamma) = 1$, $\max(\alpha,\gamma)\geq 2$.
\end{proof}
\cref{lem:exp} gives us two kinds of primes in $A$ which we will mostly consider separately. The calculations are similar in both cases $\alpha = 1$ and $\gamma = 1$, so we skip some of them for one of the cases.

The properties of the primes in $A$ give us some relations between $p$ and the corresponding big prime divisors $q$ of $p+1$ and $r$ of $p-1$, respectively.
\begin{lem}\label{lem:gamma}
	If $\gamma = 1$ and $p\in A$, then $p$ is of the form
	\begin{equation*}
    	p=2^\alpha 3^\beta n - 1
	\end{equation*}
	for some $n\in \NN$.The corresponding $q$ and $r$ in \cref{eq:Adef} are then given by
	\begin{align*}
    	q&=n,\\
    	r&=2^{\alpha - 1}3^\beta n - 1 ~.
	\end{align*}
\end{lem}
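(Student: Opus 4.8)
The statement is to be read as follows: $p\in A$ together with a choice of exponents witnessing membership means $p\in A^{\alpha,\beta,\gamma}$ for some $\alpha,\beta,\gamma\in\NNp$, and we are in the case $\gamma=1$. By \cref{def:AB} there then exist primes $q,r$ with $(6,qr)=1$ such that $p+1=2^\alpha 3^\beta q$ and $p-1=2^\gamma r = 2r$. The entire proof is the observation that these two equations force the claimed shape of $p$ and the claimed values of $q$ and $r$.

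First I would set $n:=q$. Then $p+1 = 2^\alpha 3^\beta n$ gives directly $p = 2^\alpha 3^\beta n - 1$ with $n=q\in\NNp\subset\NN$, which is the asserted form, and by construction the $q$ of \cref{eq:Adef} equals $n$. For $r$, subtract $2$ from $p+1=2^\alpha 3^\beta n$ to get $p-1 = 2^\alpha 3^\beta n - 2 = 2\bigl(2^{\alpha-1}3^\beta n - 1\bigr)$; comparing with $p-1 = 2r$ yields $r = 2^{\alpha-1}3^\beta n - 1$. By \cref{lem:exp}, in the case $\gamma=1$ we have $\alpha\geq 2$, so $2^{\alpha-1}$ is an integer and the right-hand side is visibly a positive integer, consistent with $r\in\NNp$.

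There is no real obstacle here — the content is purely bookkeeping with \cref{def:AB}. The only point that deserves an explicit sentence is that, once the representation with $\gamma=1$ is fixed, the values of $q$ and $r$ in \cref{eq:Adef} are \emph{uniquely determined} (as the prime parts of $p+1$ and $p-1$ away from $2$ and $3$), so the formulas $q=n$ and $r=2^{\alpha-1}3^\beta n-1$ are not merely one admissible choice but the only one. It is also worth noting, looking ahead to \cref{sect:ana}, that the reformulation $p=2^\alpha 3^\beta n - 1$ is exactly the parametrization to which the generalized Hardy--Littlewood conjecture will be applied, the primality of $n$ and of $2^{\alpha-1}3^\beta n - 1$ being the two linear conditions; \cref{lem:gamma} is the elementary step that puts the problem in that form.
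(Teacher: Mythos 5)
Your argument is correct and matches the paper's proof essentially verbatim: set $n=q$ so that $p+1=2^\alpha 3^\beta n$ gives $p=2^\alpha 3^\beta n-1$, and then compute $r=(p-1)/2=2^{\alpha-1}3^\beta n-1$. The extra remarks about uniqueness and the role this plays in \cref{sect:ana} are harmless elaborations but not needed.
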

\begin{proof}
	By $p+1=2^\alpha 3^\beta q$ we immediately get the desired $n$ by setting $n=q$. The given form for $r$ results from
	\begin{equation*}
		r = \frac{p-1}{2}=\frac{2^\alpha 3^\beta n -2}{2}=2^{\alpha - 1}3^\beta n - 1 ~.
	\end{equation*}
\end{proof}
In the case $\gamma=1$, these linear functions will from now on be denoted by $p(n),q(n),r(n)$.
\begin{lem}\label{lem:alpha}
	Assume that $\beta,\gamma\in\ZZ$ satisfy $\beta\geq 1$, $\gamma\geq 2$. Then the system of congruences
	\begin{align*}
	s &\equiv -1 \pmod{3^\beta} ~,\\
	s &\equiv 1 \pmod{2^\gamma}
	\end{align*}
	has a unique solution $k\in \ZZ$ with $0<k<3^\beta2^\gamma$.
	
	If $\alpha = 1$ and $p\in A$, then $p$ is of the form
	\begin{equation*}
    	p=3^\beta 2^\gamma  n + k
	\end{equation*}
	for some $n\in \NN$.
	
	The corresponding $q$ and $r$ in \cref{eq:Adef} are then given by
	\begin{align*}
    	q&=2^{\gamma-1} n + \frac{k+1}{2\cdot 3^\beta} ~,\\
    	r&=3^\beta n   + \frac{k-1}{2^\gamma} ~.
	\end{align*}
	Furthermore, $\frac{k+1}{2\cdot 3^\beta}$ is an odd integer and $\frac{k-1}{2^\gamma}$ is an integer not divisible by $3$.
\end{lem}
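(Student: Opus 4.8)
The plan is to dispatch the three assertions in turn, each by an elementary manipulation, closely parallel to \cref{lem:gamma}. For the existence and uniqueness of $k$: since $(3^\beta, 2^\gamma) = 1$, the Chinese Remainder Theorem gives a unique solution of the system modulo $3^\beta 2^\gamma$, and I would take $k$ to be its representative in $\{0, 1, \dots, 3^\beta 2^\gamma - 1\}$. It then only remains to note that $k \neq 0$, which is immediate from $k \equiv -1 \pmod{3^\beta}$ together with $\beta \geq 1$, so in fact $0 < k < 3^\beta 2^\gamma$ as claimed.

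Next, for the shape of $p$: given $p \in A$ with $\alpha = 1$, write $p + 1 = 2 \cdot 3^\beta q$ and $p - 1 = 2^\gamma r$ as in \cref{eq:Adef}. Reducing the first equation modulo $3^\beta$ and the second modulo $2^\gamma$ shows $p \equiv -1 \pmod{3^\beta}$ and $p \equiv 1 \pmod{2^\gamma}$, hence $p \equiv k \pmod{3^\beta 2^\gamma}$, so $p = 3^\beta 2^\gamma n + k$ for some $n \in \ZZ$. I would then rule out $n < 0$ using $p > 0$ and $k < 3^\beta 2^\gamma$ (a negative $n$ would force $p \leq k - 3^\beta 2^\gamma < 0$), concluding $n \in \NN$.

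Finally, for $q$ and $r$: once $\alpha = 1$, \cref{eq:Adef} forces $q = (p+1)/(2 \cdot 3^\beta)$ and $r = (p-1)/2^\gamma$, and substituting $p = 3^\beta 2^\gamma n + k$ yields exactly the asserted linear forms in $n$ with constant terms $\frac{k+1}{2 \cdot 3^\beta}$ and $\frac{k-1}{2^\gamma}$. That these constants are integers I would read off from the congruences defining $k$: $3^\beta \mid k+1$, and $k$ is odd (since $k \equiv 1 \pmod{2^\gamma}$), so $2 \cdot 3^\beta \mid k + 1$; and $2^\gamma \mid k - 1$. For the ``furthermore'' I would use $\gamma \geq 2$ to get $k + 1 \equiv 2 \pmod{4}$, so $(k+1)/2$ is odd, and hence so is $\frac{k+1}{2 \cdot 3^\beta}$, being $(k+1)/2$ divided by the odd number $3^\beta$; and $k \equiv -1 \pmod{3}$ gives $k - 1 \equiv 1 \pmod{3}$, so $3 \nmid k - 1$, and therefore $3 \nmid \frac{k-1}{2^\gamma}$ because $(3, 2^\gamma) = 1$.

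There is no genuine obstacle here; the statement is a routine ``CRT plus bookkeeping'' argument. The only points that need a little care are to invoke $\gamma \geq 2$ (rather than merely $\gamma \geq 1$) at exactly the step where the oddness of $\frac{k+1}{2 \cdot 3^\beta}$ is asserted, and to keep the two moduli $3^\beta$ and $2^\gamma$ from getting interchanged when reading off the congruences satisfied by $p$.
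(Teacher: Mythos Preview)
Your proposal is correct and follows essentially the same route as the paper: derive the two congruences for $p$, apply the Chinese Remainder Theorem to obtain $k$, substitute to get the linear forms for $q$ and $r$, and read off the divisibility claims from $\gamma\geq 2$ and $\beta\geq 1$. Your version is in fact slightly tidier in two places: you go directly to $p\equiv -1\pmod{3^\beta}$ rather than first writing $p\equiv -1\pmod{2\cdot 3^\beta}$ and then reducing, and you explicitly check $k\neq 0$ and $n\geq 0$, which the paper leaves implicit.
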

\begin{proof}
	We know $p = 2\cdot3^\beta q -1 =2^\gamma r+1$. Thus we get
	\begin{align*}
    	p &\equiv -1 \pmod{2\cdot 3^\beta} ~,\\
    	p &\equiv 1 \pmod{2^\gamma} ~.
	\end{align*}
	But as $\gamma\geq 2$, $p \equiv 1 \pmod{2^\gamma}$ implies $p\equiv -1 \pmod{2}$. By the Chinese Remainder Theorem, $p\equiv -1 \pmod{2}$ and $p\equiv -1 \pmod{3^\beta}$ yield $p \equiv -1 \pmod{2\cdot 3^\beta}$, so we can simplify the first of the two given congruences for $p$ and replace it by $p\equiv -1 \pmod{3^\beta}$. Now the two moduli are coprime, so we can use the Chinese Remainder Theorem again. Therefore we get the desired unique $k$ and the given form of $p$.
	
	The given forms for $q$ and $r$ result from:
	\begin{align*}
		q &=\frac{p+1}{2\cdot 3^\beta}=\frac{3^\beta 2^\gamma  n + k+1}{2\cdot 3^\beta}=2^{\gamma-1} n + \frac{k+1}{2\cdot 3^\beta} ~,\\
		r &=\frac{p-1}{2^\gamma}=\frac{3^\beta 2^\gamma  n + k-1}{2^\gamma}=3^\beta n   + \frac{k-1}{2^\gamma} ~.
	\end{align*}
	As $k$ fulfills
	\begin{align*}
    	k &\equiv -1 \pmod{3^\beta} ~,\\
    	k &\equiv 1 \pmod{2^\gamma} ~,
	\end{align*}
	both of $\frac{k+1}{2\cdot 3^\beta}$ and $\frac{k-1}{2^\gamma}$ are integers. By \cref{lem:exp} we get $\beta\geq 1$ and $\gamma\geq 2$, thus $3\mid k+1$ and $4\mid k-1$. But this yields $3\nmid k-1$ and $4\nmid k+1$, so $\frac{k-1}{2^\gamma}$ can not be divisible by 3 and $\frac{k+1}{2\cdot 3^\beta}$ can not be divisible by 2.
\end{proof}
In the case $\alpha=1$, these linear functions will from now on be denoted by $p(n), q(n), r(n)$.
\begin{lem}\label{lem:irr}
	In the situation of both Lemma~\ref{lem:gamma} and \ref{lem:alpha} the linear functions $p(n),q(n),r(n)$ describing $p, q, r$ are irreducible and primitive polynomials (i.\,e.\ of the form $an+b$ with $(a,b)=1$). For all odd primes $\l$, there is no $n\in\NN$ such that more than one of $p(n),q(n),r(n)$ is divisible by $\l$.
\end{lem}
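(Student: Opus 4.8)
\noindent\emph{Proof proposal.} The plan is to proceed by elementary divisibility bookkeeping, treating the two regimes of \cref{lem:gamma} ($\gamma=1$) and \cref{lem:alpha} ($\alpha=1$) in parallel. First I would recall that for a linear polynomial $an+b$ with $a\neq 0$, being irreducible over $\ZZ$ is the same as being primitive, and the lemma itself tells us both just mean $(a,b)=1$; so the first claim reduces to reading off the coefficients of the six polynomials and checking coprimality. In the $\gamma=1$ regime this is immediate, since $q(n)=n$ and the constant terms of $p(n)=2^\alpha 3^\beta n-1$ and $r(n)=2^{\alpha-1}3^\beta n-1$ are both $-1$. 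In the $\alpha=1$ regime it is exactly what the last sentence of \cref{lem:alpha} provides: $k$ is coprime to $6$ (being $\equiv -1 \bmod 3^\beta$ and $\equiv 1 \bmod 2^\gamma$), $\frac{k+1}{2\cdot 3^\beta}$ is odd, and $\frac{k-1}{2^\gamma}$ is not divisible by $3$ — precisely enough to make each constant term coprime to its leading coefficient $3^\beta 2^\gamma$, $2^{\gamma-1}$, respectively $3^\beta$.

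For the second claim, I would fix an odd prime $\ell$ and suppose that for some $n$ two of the forms, say $f(n)=an+b$ and $g(n)=cn+d$, are both divisible by $\ell$; then $\ell$ also divides the $n$-independent integer $c\,f(n)-a\,g(n)=cb-ad$. The key step is to compute this constant for each of the three pairs $\{p,q\}$, $\{p,r\}$, $\{q,r\}$ in each regime and observe that it is always $\pm 2^i 3^j$ for explicit $i,j\ge 0$: the pairs $\{p,q\}$ and $\{q,r\}$ give $\pm 1$ or a pure power of $2$, while only $\{p,r\}$ contributes a factor of $3$, equal to $-2^{\alpha-1}3^\beta$ when $\gamma=1$ and $-3^\beta$ when $\alpha=1$. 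In the $\alpha=1$ regime these identities fall out of $k+1=2\cdot 3^\beta\cdot\frac{k+1}{2\cdot 3^\beta}$, $k-1=2^\gamma\cdot\frac{k-1}{2^\gamma}$ and the consequent relation $3^\beta\cdot\frac{k+1}{2\cdot 3^\beta}-2^{\gamma-1}\cdot\frac{k-1}{2^\gamma}=1$.

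Since $\ell$ is an odd prime dividing a number of the shape $\pm 2^i 3^j$, the only case not immediately excluded is $\ell=3$ for the pair $\{p,r\}$. To close that case I would note that $p(n)$ is never divisible by $3$: its constant term is $-1$ when $\gamma=1$ and $k$ when $\alpha=1$, and both are coprime to $3$ (for $k$ because $k\equiv -1\pmod{3^\beta}$ with $\beta\ge 1$, by \cref{lem:exp}). Hence $p(n)$ can never be among two forms simultaneously divisible by $3$, so at most one of $p(n),q(n),r(n)$ is divisible by $3$, which finishes the argument. I do not expect a genuine obstacle here; the only thing to be careful about is keeping the two parameter regimes, and the divisibility facts imported from \cref{lem:gamma,lem:alpha}, correctly matched up throughout the small case analysis.
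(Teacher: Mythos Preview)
Your proposal is correct and follows essentially the same route as the paper: primitivity is read off from the coefficients using the facts recorded in \cref{lem:gamma,lem:alpha}, and the second claim is obtained by forming a linear combination of two of the forms that kills the $n$-term and leaves a constant with only $2$ and $3$ as prime factors. The paper's only cosmetic shortcut is to use the built-in relations $c_1 q(n)-1=p(n)=c_2 r(n)+1$ as the linear combinations, which makes the resulting constants $\pm 1$ and $2$ and so dispenses with your separate handling of $\ell=3$.
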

\begin{proof}
	Primitivity is obvious in the situation of \cref{lem:gamma} and follows immediately from the remarks on $\frac{k+1}{2\cdot 3^\beta}$ and $\frac{k-1}{2^\gamma}$ in the situation of \cref{lem:alpha}. Since they are linear primitivity implies irreducibility.
	
	By Lemmas~\ref{lem:gamma} and \ref{lem:alpha}, there exist $c_1,c_2\in \NN$ such that $c_1q(n)-1=p(n)=c_2r(n)+1$. This means that if $l\mid p(n)$ and $\l \mid q(n)$ or $\l \mid r(n)$ then also $\l \mid 1$ (because $p(n)$ and $q(n)$ or $p(n)$ and $r(n)$, respectively, have multiples which differ by 1) which is a contradiction. The same argument yields $(q(n),r(n))\geq 2$.
	
	Thus there can not be any odd prime number dividing more than one of $p(n),q(n),r(n)$. In fact, we can even get $(q(n),r(n))=1$ as in both Lemmas~\ref{lem:gamma} and \ref{lem:alpha} one polynomial out of $q$ and $r$ has only odd values.
\end{proof}


\section{Analytic estimates} \label{sect:ana}

In this section we will first present and give evidence for the generalized Hardy--Littlewood conjecture and then move on to prove Theorems~\ref{thm:bound}, \ref{thm:uncon} and \ref{thm:A0} for $A$ and $B$ instead of $\allA$ and $\allB$. Finally we will show in \cref{lem:same} that the estimates from these theorems also hold for $\allA$ and $\allB$.

We now want to estimate the density of $A$ in $\NNp$ using \cref{thm:hlc}. First we need to define a value $\SS$ appearing in the conjecture.
\begin{defi} \label{def:SS}
Let $R \in \NNp$ and $a_i, b_i \in \ZZ$ for $1 \leq i \leq R$ with $(a_i, b_i) = 1$. The \emph{Hardy--Littlewood singular series} $\SS$ corresponding to the $a_i, b_i$ is defined as
\begin{equation}
\SS = \prod_{p\text{ prime}} \left(1-\frac{\rho(p)}{p}\right)\left(1-\frac{1}{p}\right)^{-R} \label{eq:sing} ~,
\end{equation}
where we write $\rho(p)$ for the number of solutions of $\prod_{i=1}^{R}(a_in+b_i)=0$ for $n \in \ZZ / p\ZZ$.
\end{defi}
\begin{conj}[Generalized Hardy--Littlewood conjecture] \label{thm:hlc}
    Fix $R \in \NNp$ and $C > 0$. Let $a_i, b_i \in \ZZ$ with $(a_i, b_i) = 1$, $a_i>0$ for $1 \leq i \leq R$.
    If the product for $\SS$ in \cref{eq:sing} converges%
    \footnote{In particular this implies $a_i \neq a_j$ or $b_i \neq b_j$ for $i \neq j$.}%
    , we conjecture that
    \begin{equation*}
    \abs{\set{n \leq x : \forall i\colon a_i n + b_i \mathrm{~prime}}} = \left( 1 + o_{R, C}(1) \right) \SS \frac{x}{ (\log x)^R}
    + o_{R, C} \! \left( \frac{x}{(\log x)^{R}} \right)
    \end{equation*}
    for $x$ satisfying $(\log x)^C \geq |a_i|, |b_i|$ for $1 \leq i \leq R$.
\end{conj}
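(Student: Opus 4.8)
Since \cref{thm:hlc} is a conjecture, the honest ``plan'' is not a proof but a two-part justification: (i) derive the singular series $\SS$ heuristically as the only reasonable candidate for the leading constant, and (ii) record the cases and uniform estimates that already hold. For (i) I would run the usual local-density computation. Model ``$a_i n + b_i$ is prime'' for $n \le x$ as $R$ events each of weight $1/\log x$; if they were independent, a random $n \le x$ would pass all $R$ tests with probability $(\log x)^{-R}$, predicting $x/(\log x)^R$ solutions. The events are not independent, but the only correlations are local: for each prime $p$ the density of residue classes $n \bmod p$ with $p \nmid \prod_i(a_i n + b_i)$ is exactly $1 - \rho(p)/p$, with $\rho(p)$ as in \cref{def:SS}, against an ``independent'' prediction of $(1-1/p)^R$. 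Forming the ratio and taking the product over all $p$ --- which converges precisely under the hypothesis imposed in the conjecture --- yields the factor in \cref{eq:sing}, and $\SS \cdot x/(\log x)^R$ is then the predicted main term. This is exactly the circle-method prediction of Hardy and Littlewood and the linear-forms heuristic of \cite{greentao}.

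For (ii), the evidence I would cite is twofold. First, the case $R=1$ is a theorem: it is the prime number theorem in arithmetic progressions, and the Siegel--Walfisz theorem supplies the asymptotic uniformly for $|a_1|,|b_1|$ up to any fixed power of $\log x$. In other words, the extra uniformity hypothesis of \cref{thm:hlc} is, for $R=1$, literally Siegel--Walfisz, and for general $R$ it is the natural analogue of it; this is worth emphasising, since it is this uniformity (not just the bare asymptotic) that the later applications exploit. Second, Brun--Selberg sieve upper bounds give $\abs{\{n\le x : a_in+b_i \text{ prime } \forall i\}} \ll_R \SS\,x/(\log x)^R$ unconditionally, so the conjecture is already known to have the correct order of magnitude with only the asymptotic constant in doubt --- and it is precisely such a bound, via \cite{halb}, that drives \cref{thm:uncon}. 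One can also note that \cite{greentao} proves the general linear-forms conjecture for systems of finite complexity, a class that contains $k$-term progressions but, crucially, not the single-variable systems with $R\ge 2$ at issue here.

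The main obstacle is that no version of this plan upgrades to a proof: already $R=2$ with $a_1n+b_1=n$ and $a_2n+b_2=n+2$ is the twin-prime conjecture, and the \emph{parity problem} blocks sieve methods from detecting primes in configurations this sparse. So the best one can genuinely deliver is the derivation above together with a precise accounting of what is being assumed --- convergence of $\SS$ and a Siegel--Walfisz-strength uniformity in the coefficients --- with the understanding that an unconditional proof would require an essentially new ingredient capable of breaking parity.
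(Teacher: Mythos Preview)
Your assessment is exactly right and matches the paper's own treatment: \cref{thm:hlc} is a conjecture, not a theorem, and the paper offers no proof but only the same three pieces of evidence you list --- the Cram\'er-model heuristic for the singular series, the observation that $R=1$ is the Siegel--Walfisz theorem (motivating the uniformity in $C$), and the unconditional sieve upper bound of \cite{halb}. Your write-up is in fact more detailed than the paper's brief discussion, in particular in naming the parity problem as the obstruction and in noting that the Green--Tao finite-complexity results do not cover the single-variable $R\ge 2$ systems at hand.
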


\begin{rem}
    If $\SS = \OO[R, C]{1}$ holds in \cref{thm:hlc} (as will be our case), we can absorb the first error term in the second one.
\end{rem}

This conjecture is a quantitative generalization of the well-known $k$-tuples conjecture (due to Hardy and Littlewood, \cite{hardylittlew}), which is precisely the special case $a_i = 1$.

\cref{thm:hlc} is in turn almost a special case ($d = 1$) of the generalized Hardy--Littlewood conjecture presented by Green and Tao in \cite{greentao}. The difference is that we expect a certain uniformity of the error term. Specifically we conjecture that the error term does not depend on the specific $a_i$ and $b_i$ as long as $(\log x)^A \geq |a_i|, |b_i|$. This is done in analogy to the Siegel--Walfisz theorem, which is (except differing error terms) equivalent to the $R = 1$ case of the conjecture.

It is also worthy to note that the conjecture is in perfect accordance with the heuristic \emph{Cramér-model} which asserts that the ``probability'' of a number $n$ being prime is $\prod_{p \leq n} \lfrac1{p} \sim \lfrac1{\log(n)}$. The singular series is then introduced as a correction factor accounting for the fact that the probability of different numbers with certain gaps being prime is not ``independent''.

\begin{lem}\label{lem:sing}
	For all choices of $\alpha,\beta,\gamma$ satisfying the conditions (\ref{eq:ok}) the singular series $\SSA$ corresponding to the linear functions $p(n),q(n),r(n)$ is the same.
\end{lem}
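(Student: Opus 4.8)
The plan is to compute the singular series $\SS$ from \cref{def:SS} prime by prime and show that the local factor at each prime $\ell$ does not depend on $\alpha,\beta,\gamma$. The singular series is $\SS = \prod_{\ell} \bigl(1-\tfrac{\rho(\ell)}{\ell}\bigr)\bigl(1-\tfrac1\ell\bigr)^{-3}$ with $R=3$, where $\rho(\ell)$ counts solutions $n \in \ZZ/\ell\ZZ$ of $p(n)q(n)r(n) \equiv 0$. So it suffices to prove that $\rho(\ell)$ is the same for all admissible $\alpha,\beta,\gamma$ and for both cases of \cref{lem:gamma} and \cref{lem:alpha}. I would split into $\ell = 2$, $\ell = 3$, and odd $\ell \geq 5$.

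For $\ell = 2$: in both cases one of $q(n),r(n)$ is constantly odd (as noted at the end of \cref{lem:irr}) and hence contributes no root mod $2$; the leading coefficients of $p(n)$ and of the remaining linear form are both even (they are multiples of $2^\alpha 3^\beta$, resp.\ $2^\gamma$ or $2^{\gamma-1}$, with the relevant exponent $\geq 1$), so those forms are constant mod $2$ — and since $p(n)$ is always odd (it is a prime $>2$) and the other form is odd too, we get $\rho(2) = 0$ regardless of $\abg$. For $\ell = 3$: similarly $p(n) \equiv 2 \pmod 3$ is constant and nonzero; in the $\gamma=1$ case $q(n)=n$ runs over all residues (one root) and $r(n)=2^{\alpha-1}3^\beta n - 1 \equiv -1 \pmod 3$ is constant nonzero (since $\beta\geq1$), giving $\rho(3)=1$; in the $\alpha=1$ case $q(n) = 2^{\gamma-1}n + \tfrac{k+1}{2\cdot3^\beta}$ has leading coefficient a unit mod $3$ (one root) while $r(n)$ has leading coefficient $3^\beta \equiv 0$ and constant term not divisible by $3$ (by \cref{lem:alpha}), so again $\rho(3) = 1$. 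For odd $\ell \geq 5$: each of $p(n),q(n),r(n)$ is primitive linear by \cref{lem:irr}, so each has leading coefficient coprime to $\ell$ whenever $\ell$ does not divide that coefficient, contributing exactly one root; and \cref{lem:irr} guarantees that no two of them share a root mod $\ell$, so the three root sets are disjoint. The only subtlety is whether $\ell$ divides a leading coefficient: but the leading coefficients are $2^a 3^b$ for various $a,b$, which are coprime to every $\ell \geq 5$, so each form genuinely contributes one root and $\rho(\ell) = 3$ for all $\ell \geq 5$, independent of $\abg$.

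Assembling: $\rho(\ell)$ equals $0,1,3$ at $\ell = 2,3$ and $\ell\geq5$ respectively, in every admissible case, hence the product $\SS$ is the same, and one computes $\SS = \bigl(1-\tfrac12\bigr)^{-3}\,(1-\tfrac13)\bigl(1-\tfrac13\bigr)^{-3}\prod_{\ell\geq5}\bigl(1-\tfrac3\ell\bigr)\bigl(1-\tfrac1\ell\bigr)^{-3} = 8 \cdot \tfrac{2}{3} \cdot \tfrac{27}{8} \cdot \prod_{\ell\geq5}(\cdots)$, which matches the stated $\SSA = 9\prod_{p\geq5}(1-\tfrac3p)(1-\tfrac1p)^{-3}$; at this stage one should also check the product converges, which it does since $(1-\tfrac3p)(1-\tfrac1p)^{-3} = 1 + O(p^{-2})$.

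The main obstacle is purely bookkeeping: there is no deep step, but one must be careful to treat the two regimes $\gamma = 1$ and $\alpha = 1$ separately at $\ell = 2$ and $\ell = 3$, since the explicit linear functions differ, and to invoke the divisibility facts about $\tfrac{k+1}{2\cdot3^\beta}$ and $\tfrac{k-1}{2^\gamma}$ from \cref{lem:alpha} precisely where they are needed. Everything for $\ell \geq 5$ is immediate from \cref{lem:irr} once one observes the leading coefficients are $\{2,3\}$-units.
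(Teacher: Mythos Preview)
Your overall strategy matches the paper's: show that $\rho(\ell)$ is independent of $\alpha,\beta,\gamma$ for each prime $\ell$. Your treatment of $\ell=3$ and of $\ell\geq 5$ is correct and essentially what the paper does. However, your computation at $\ell=2$ is wrong: you claim $\rho(2)=0$, while in fact $\rho(2)=1$.

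The error is in the assertion that ``the leading coefficients of $p(n)$ and of the remaining linear form are both even.'' In the $\gamma=1$ case of \cref{lem:gamma} the constantly odd form is $r(n)=2^{\alpha-1}3^\beta n-1$ (since $\alpha\geq 2$), so the \emph{remaining} form is $q(n)=n$, whose leading coefficient is $1$, not even; hence $q$ contributes exactly one root modulo $2$. In the $\alpha=1$ case of \cref{lem:alpha} the constantly odd form is $q(n)$, so the remaining form is $r(n)=3^\beta n+\tfrac{k-1}{2^\gamma}$, whose leading coefficient $3^\beta$ is again odd; once more a single root modulo $2$. Thus $\rho(2)=1$ in both regimes, exactly as the paper states.

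This is not a harmless bookkeeping slip: with $\rho(2)=0$ the local factor at $2$ would be $(1-\tfrac12)^{-3}=8$, whereas the correct $\rho(2)=1$ gives $(1-\tfrac12)(1-\tfrac12)^{-3}=4$. Indeed your own final arithmetic $8\cdot\tfrac{2}{3}\cdot\tfrac{27}{8}=18$ does \emph{not} equal $9$, so the ``match'' you announce at the end is illusory and should have alerted you to the mistake; the correct product is $4\cdot\tfrac{9}{4}=9$.
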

\begin{proof}
	As $R=3$ for all these choices, we just need to show that $\rho(p)$ is independent from $\alpha,\beta,\gamma$ for all primes $p$.
	
	For $p\in\{2,3\}$ we notice that for all choices of $\alpha,\beta,\gamma$ satisfying (\ref{eq:ok}), two of the three polynomials are never divisible by $p$, while the third one gives us exactly one solution of $\prod_{i=1}^{R}(a_in+b_i)=0$~modulo~$p$. Thus $\rho(2)=\rho(3)=1$.
	
	For $p\geq 5$ we find $(p,a_i)=1$ for $i=1,2,3$ and all allowed choices of $\alpha,\beta,\gamma$. By Bezout's identity, each of the polynomials yields us exactly one solution of $\prod_{i=1}^{3}(a_in+b_i)=0$~modulo~$p$. But by \cref{lem:irr}, these have to be pairwise distinct. Thus $\rho(p)=3$.
\end{proof}

This allows us to calculate the singular series
\begin{align*}
	\SSA &=\frac{1-\frac{1}{2}}{\left(1-\frac{1}{2}\right)^3}\cdot\frac{1-\frac{1}{3}}{\left(1-\frac{1}{3}\right)^3} \prod_{p\geq 5\text{ prime}}\left(1-\frac{3}{p}\right)\left(1-\frac{1}{p}\right)^{-3} \\
        &=9\prod_{p\geq 5\text{ prime}}\left(1-\frac{3}{p}\right)\left(1-\frac{1}{p}\right)^{-3}\approx \SSnum  ~.
\end{align*}
The convergence of the product is guaranteed because the sum over $\left( 1 - \lfrac3{p} \right) ( 1 - \lfrac1{p})^{-3} - 1 = (1 - 3p)(p-1)^{-3}$ converges.

We first prove the statement analogous to \cref{thm:bound} 
\begin{thm} \label{thm:bound_}
    Assuming \cref{thm:hlc} we have
    \begin{equation}
    |A_{\leq x}| \sim \frac{\SSA}{2} \frac{x}{(\log x)^3}~,
    \end{equation}
\end{thm}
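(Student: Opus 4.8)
The plan is to partition $A_{\leq x}$ according to the exact exponents of $2$ and $3$ in $p\pm 1$ and to count each piece with \cref{thm:hlc}. For $p\in A$ the numbers $\alpha=v_2(p+1)$, $\beta=v_3(p+1)$ and $\gamma=v_2(p-1)$ are determined by $p$ (the cofactors $q,r$ being coprime to $6$), so
\[
A_{\leq x}=\bigsqcup_{\alpha,\beta,\gamma} A^{\alpha,\beta,\gamma}_{\leq x},
\]
a \emph{disjoint} union over the triples allowed by \cref{lem:exp}, i.e.\ those with $\gamma=1$, $\alpha\geq2$, $\beta\geq1$, or with $\alpha=1$, $\gamma\geq2$, $\beta\geq1$. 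In either case set $M=\max(\alpha,\gamma)\geq2$; by \cref{lem:gamma,lem:alpha} the polynomial $p(n)$ has leading coefficient $2^M3^\beta$ and all coefficients of $p(n),q(n),r(n)$ are $\ll 2^M3^\beta$. Discarding the $O(1)$ values of $n$ for which $q(n)$ or $r(n)$ falls into $\{2,3\}$ (exactly the ones removed by the condition $(6,qr)=1$), one gets, with $y_{\alpha,\beta,\gamma}:=x/(2^M3^\beta)+O(1)$,
\[
\bigl|A^{\alpha,\beta,\gamma}_{\leq x}\bigr|=\bigl|\{n\leq y_{\alpha,\beta,\gamma}:p(n),q(n),r(n)\text{ all prime}\}\bigr|+O(1),
\]
and $A^{\alpha,\beta,\gamma}_{\leq x}=\emptyset$ unless $2^M3^\beta\ll x$, so only $O((\log x)^2)$ triples contribute.

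Next I would fix a small $\epsilon\in(0,1)$, choose $C_0$ with $C_0\epsilon>3$, and split the contributing triples at the threshold $2^M3^\beta\leq(\log x)^{C_0}$. For the \emph{large} triples ($2^M3^\beta>(\log x)^{C_0}$) the trivial bound $\bigl|A^{\alpha,\beta,\gamma}_{\leq x}\bigr|\ll x/(2^M3^\beta)+1$ suffices: since $2^\alpha3^\beta>T$ implies $(2^\alpha3^\beta)^{-1}\leq T^{-\epsilon}(2^\alpha3^\beta)^{-(1-\epsilon)}$ and $\sum_{\alpha,\beta\geq0}(2^\alpha3^\beta)^{-(1-\epsilon)}<\infty$, summing gives
\[
\sum_{\text{large}}\bigl|A^{\alpha,\beta,\gamma}_{\leq x}\bigr|\ll_\epsilon x(\log x)^{-C_0\epsilon}+(\log x)^2=o\!\left(\frac{x}{(\log x)^3}\right).
\]

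For the \emph{small} triples all coefficients of $p(n),q(n),r(n)$ are $\ll(\log x)^{C_0}$, the polynomials are admissible by \cref{lem:irr}, and their singular series is the fixed constant $\SSA$ by \cref{lem:sing} (in particular $\SSA=O(1)$, so the remark after \cref{thm:hlc} absorbs the multiplicative error). Applying \cref{thm:hlc} with $R=3$ and a parameter $C>C_0$ to the counting range $y_{\alpha,\beta,\gamma}\geq x/(\log x)^{C_0}\to\infty$, and using $\log y_{\alpha,\beta,\gamma}=\log x+O(\log\log x)=(1+o(1))\log x$ uniformly over small triples, I obtain
\[
\bigl|A^{\alpha,\beta,\gamma}_{\leq x}\bigr|=\bigl(\SSA+o(1)\bigr)\frac{y_{\alpha,\beta,\gamma}}{(\log y_{\alpha,\beta,\gamma})^3}+O(1)=\bigl(\SSA+o(1)\bigr)\frac{x}{2^M3^\beta(\log x)^3}+O(1),
\]
with $o(1)\to0$ as $x\to\infty$ uniformly over the $O((\log\log x)^2)$ small triples. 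Summing and noting that $\sum_{\text{small}}(2^M3^\beta)^{-1}\to\sum_{\gamma=1,\alpha\geq2,\beta\geq1}\!\!\frac1{2^\alpha3^\beta}+\sum_{\alpha=1,\gamma\geq2,\beta\geq1}\!\!\frac1{2^\gamma3^\beta}=\tfrac14+\tfrac14=\tfrac12$, the small triples contribute $\frac{\SSA}{2}\frac{x}{(\log x)^3}+o\!\left(\frac{x}{(\log x)^3}\right)$. Adding the large-triple bound yields $|A_{\leq x}|\sim\frac{\SSA}{2}\frac{x}{(\log x)^3}$.

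The main obstacle is that the number of triples being summed grows with $x$, so a termwise use of Hardy--Littlewood is worthless: everything rests on the \emph{uniformity} of the error term in \cref{thm:hlc} for all coefficients up to $(\log x)^{C_0}$ — the Siegel--Walfisz-type strengthening built into the conjecture — together with choosing $C_0$ large enough, relative to $3$ and to a fixed $\epsilon$, that the crude bound on the remaining larger triples is still $o(x/(\log x)^3)$, while at the same time taking the Hardy--Littlewood parameter $C>C_0$ so that \cref{thm:hlc} stays applicable to the shortened counting range $y_{\alpha,\beta,\gamma}$. Checking the admissibility of the polynomials and the uniqueness of the decomposition is routine given \cref{lem:irr,lem:exp}.
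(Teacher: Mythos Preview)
Your proposal is correct and follows essentially the same route as the paper: partition $A_{\leq x}$ by the exponents $(\alpha,\beta,\gamma)$, cut at a threshold of the form $2^M3^\beta\le(\log x)^{C_0}$, dispose of the large triples by a trivial count, and apply the uniform Hardy--Littlewood conjecture to each small triple before summing the geometric series to $\tfrac12$. The only cosmetic differences are that the paper fixes the concrete values $y=(\log x)^6$ and $C=7$ and bounds the tail by the even cruder estimate $\#\{\text{terms}\}\cdot(x/y+1)\ll x/(\log x)^4$, whereas you keep $C_0$ generic and use the Rankin-type inequality $(2^M3^\beta)^{-1}\le T^{-\epsilon}(2^M3^\beta)^{-(1-\epsilon)}$; both arguments are equally valid here.
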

\begin{proof}
By \cref{lem:exp}, we have
\[
    |\Ax| = \sum_{\beta \geq 1, \gamma \geq 2} \Ax^{1, \beta, \gamma} + \sum_{\alpha \geq 2, \beta \geq 1} \Ax^{\alpha, \beta, 1} ~.
\]
We start with the sum for $\alpha = 1$. In this case all values of $p$ are of the form $p(n) = 3^\beta 2^\gamma n + k$ with $n \in \NN$ and $0 \leq k < 3^\beta2^\gamma$. For $p \in A$ we must also have $q(n) = 2^{\gamma-1} n + \frac{k+1}{2 \cdot 3^\beta}, r(n) = 3^\beta n + \frac{k-1}{2^\gamma}$ prime. 

We set $y = (\log x)^6$ and split the sum into two parts (The lower bound for $\beta$ and $\gamma$ is implicit in all sums):
\begin{equation}
    \sum_{\beta \geq 1, \gamma \geq 2} \Ax^{1, \beta, \gamma} =  \sum_{3^\beta 2^\gamma \leq y} \Ax^{1, \beta, \gamma}
                                                               + \sum_{y < 3^\beta 2^\gamma \leq x} \Ax^{1, \beta, \gamma} \label{eq:part}
\end{equation}
We bound the second sum trivially by
\begin{align*}
    \sum_{y < 3^\beta 2^\gamma \leq x} \Ax^{1, \beta, \gamma} &=
                \sum_{y < 3^\beta 2^\gamma \leq x} \left|\left\{0 \leq n \leq \frac{x-k}{3^\beta 2^\gamma} : p(n), q(n), r(n) \text{ prime} \right\}\right| \\
                                                             &\leq  \sum_{y < 3^\beta 2^\gamma \leq x} \left( \frac{x-k}{3^\beta 2^\gamma} + 1 \right)
                                                             \leq \log_2(x) \log_3(x) \left( \frac{x}{y}  + 1 \right)\\
                                                             &\ll \frac{x}{(\log x)^4}  ~.
\end{align*}
For the first sum in \cref{eq:part} we have
\begin{align*}
    3^\beta 2^\gamma \leq (\log x)^6
                     & \leq \left( \log\!\left(x - (\log x)^6 \right) - 6 \log\log x \right)^7 \\
                     &\leq \left(\log \frac{x-k}{3^\beta 2^\gamma} \right)^7
\end{align*}
for $x$ large enough, allowing us (together with \cref{lem:irr}) to invoke the generalized Hardy--Littlewood-Conjecture (\cref{thm:hlc}) for $C = 7$:
\begin{align}
     \sum_{3^\beta 2^\gamma \leq y} \Ax^{1, \beta, \gamma} &=
         \sum_{3^\beta 2^\gamma \leq y} \left(
                \SSA \frac{x-k}{3^\beta 2^\gamma}\frac1{\left( \log\frac{x-k}{3^\beta 2^\gamma} \right)^3} 
                + o \left( \frac{x-k}{3^\beta 2^\gamma} \frac1{\left( \log\frac{x-k}{3^\beta 2^\gamma} \right)^3}  \right)
         \right) 
         \label{eq:invoke}\\
         &= \SSA \frac{x}{(\log x)^3}\sum_{3^\beta 2^\gamma \leq y} \frac1{3^\beta 2^\gamma} + o \left( \frac{x}{(\log x)^3} \right) \notag
\end{align}
For the summation of the main term we used
\begin{equation*}
\frac{x}{(\log x)^3} \sim \frac{x}{\left( \log(x - (\log x)^6) - 6 \log\log x \right)^3}
                     \geq \frac{x - k}{\left( \log \frac{x-k}{3^\beta 2^\gamma}\right)^3}
                     \geq \frac{x}{(\log x)^3} - (\log x)^6
                     \sim \frac{x}{(\log x)^3}  ~.
\end{equation*}
We can sum all the error terms because the implied constants are absolute (i.\,e.\ independent of $\beta$ and $\gamma$) and furthermore 
\[
    o\!\left(\frac{x-k}{\left( \log\frac{x-k}{3^\beta 2^\gamma} \right)^3} \right) = o\!\left(\frac{x}{( \log x )^3} \right)  ~.
\]
%
Note that $\sum_{\beta \geq 1, \gamma \geq 2} 3^{-\beta} 2^{-\gamma} = \frac14 < \infty$.

It is obvious that the same argument (replacing $\gamma$ by $\alpha$) also works for $\sum_{\alpha \geq 2, \beta \geq 1} \Ax^{\alpha, \beta, 1}$ (since the polynomial for $p$ has leading coefficient $2^\alpha 3^\beta$ instead of $3^\beta 2^\gamma$).
Putting all together, we get
\begin{equation}
    |\Ax| = 2 \SSA \frac{x}{(\log x)^3} \sum_{\substack{\beta \geq 1, \sigma \geq 2\\3^\beta 2^\sigma \leq y}}\frac1{3^\beta 2^\sigma} +
                o\!\left(\frac{x}{(\log x)^3} \right) \label{eq:almost}  ~.
\end{equation}
Taking a closer look at the sum in the last equation, we notice that
\begin{align*}
    \frac14 \geq \sum_{3^\beta 2^\sigma \leq y} \frac1{2^\sigma 3^\beta}
    &\geq \sum_{3^\beta,\,2^\sigma \leq \sqrt{y}} \frac1{2^\sigma 3^\beta} \\
    &= \left( \sum_{\beta \geq 1} \frac1{3^\beta}  + \OO{\frac1{\sqrt{y}}}\right) \left( \sum_{\sigma \geq 2} \frac1{3^\beta} + \OO{\frac1{\sqrt{y}}}\right)\\
    &= \frac14 + \OO{\frac1{(\log x)^{5/2}}} ~,
\end{align*}
which yield the claim when plugged into \cref{eq:almost}.
\end{proof}


Halberstam and Richert \cite{halb} unconditionally prove an upper bound for the quantity of interest in \cref{thm:hlc}:

\begin{thm}[\cite{halb}, Theorem 5.7] \label{thm:halb}
In the Situation of \cref{thm:hlc} (although we no longer need that $(a_i, b_i) = 1$)
set
\begin{equation*}
E = \prod_{i=1}^{R} a_i \cdot \prod_{1\leq r < s \leq R} (a_r b_s - a_s b_r) ~.
\end{equation*}
If $E \neq 0$ we have
\begin{equation}
    \abs{\set{n \leq x : \forall i\colon a_i n + b_i \mathrm{~prime}}}  \leq 2^R R! \SS \frac{x}{(\log x)^R}
        + \OO[R]{x\frac{ \log\log 3x + \log\log 3|E|}{(\log x)^{R+1}}} \label{eq:halb}  ~,
\end{equation}
where $\SS$ is the singular series given in \cref{def:SS}.
\end{thm}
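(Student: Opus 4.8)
The plan is to derive this from Selberg's upper bound sieve; the constants $2^RR!$ and $\SS$ and the $\log\log 3\abs{E}$ in the error then fall out of the standard machinery (this is essentially the argument in \cite{halb}). First I would recast the left-hand side as a sifting function. Put $\mathcal A=\bigl(\prod_{i=1}^{R}(a_in+b_i)\bigr)_{1\le n\le x}$ and, for a parameter $z$ to be fixed later, sift $\mathcal A$ by the primes $p<z$: write $P(z)=\prod_{p<z}p$ and $S(\mathcal A,z)=\abs{\set{n\le x:(\prod_i(a_in+b_i),P(z))=1}}$. For squarefree $d\mid P(z)$ the Chinese Remainder Theorem gives $\abs{\mathcal A_d}=\tfrac{\rho(d)}{d}x+r_d$ with $\abs{r_d}\le\rho(d)$, where $\rho$ is the multiplicative function of \cref{def:SS}; here $\rho(p)=R$ for $p\nmid E$, $p>R$ (the roots $-b_i/a_i$ then exist and are pairwise distinct mod $p$), $\rho(p)\le R$ always, hence $\rho(d)\le R^{\omega(d)}$ with $\omega(d)$ the number of prime divisors of $d$. (If $\rho(p)=p$ for some $p$ then some $a_in+b_i\equiv0\pmod p$ identically, forcing the left-hand side to be $\OO[R]{1}$, so assume $\rho(p)<p$.) If every $a_in+b_i$ is prime then each is $\ge2$, so either all are $\ge z$ — and $n$ is counted by $S(\mathcal A,z)$ — or some $a_in+b_i\in[2,z)$, and there are $\le\sum_i(z/a_i+1)\le R(z+1)$ such $n$; thus $\abs{\set{n\le x:\forall i\ a_in+b_i\text{ prime}}}\le S(\mathcal A,z)+\OO[R]{z}$.

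Next I would apply Selberg's $\Lambda^2$-sieve: with weights $\lambda_d$ supported on squarefree $d<z$ dividing $P(z)$, $\lambda_1=1$, $\abs{\lambda_d}\le1$, and the usual minimisation of $\sum_{d_1,d_2}\lambda_{d_1}\lambda_{d_2}\rho([d_1,d_2])/[d_1,d_2]$, one gets $S(\mathcal A,z)\le x/G(z)+\sum_{d<z^2,\ d\mid P(z)}3^{\omega(d)}\abs{r_d}$, where $G(z)=\sum_{d<z,\ d\mid P(z)}\mu^2(d)\prod_{p\mid d}\rho(p)/(p-\rho(p))$. The remainder is $\le\sum_{d<z^2}(3R)^{\omega(d)}\ll_R z^2(\log z)^{3R}$, so taking $z=x^{1/2}(\log x)^{-2R-1}$ makes it $\ll_R x(\log x)^{-R-1}$, well inside the target error.

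The crux is a sharp lower bound for $G(z)$. Let $g$ be multiplicative with $g(p)=\rho(p)/(p-\rho(p))$, so $G(z)=\sum_{d<z}\mu^2(d)g(d)$ (a squarefree $d<z$ automatically divides $P(z)$). Since $g(p)=R/p+\OO[R]{p^{-2}}$ for $p\nmid E$, $p>R$, the Dirichlet series $\sum_d\mu^2(d)g(d)d^{-s}=\prod_p(1+g(p)p^{-s})$ factors, near $s=0$, as $\zeta(1+s)^{R}$ times an Euler product converging to $\prod_p(1+g(p))(1-1/p)^{R}=\SS^{-1}$; a Karamata-type Tauberian theorem — or, following \cite{halb}, a partial-summation estimate resting on the dimension relation $\sum_{p<w}\rho(p)p^{-1}\log p=R\log w+\OO[R]{\log\log 3\abs{E}}$ — then gives $G(z)\ge\dfrac{(\log z)^{R}}{R!\,\SS}\Bigl(1-\OO[R]{\tfrac{\log\log 3z+\log\log 3\abs{E}}{\log z}}\Bigr)$. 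It is here that the finitely many primes $p\mid E$ at which two forms coincide modulo $p$ (so $\rho(p)\le R-1$) intervene, perturbing the local densities with aggregate weight $\ll\log\log 3\abs{E}$.

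Finally, the chosen $z$ satisfies $\log z=\tfrac12\log x+\OO[R]{\log\log x}$, so $(\log z)^{-R}=2^{R}(\log x)^{-R}\bigl(1+\OO[R]{\tfrac{\log\log 3x}{\log x}}\bigr)$, and combining this with the previous displays gives $x/G(z)\le2^{R}R!\,\SS\,x/(\log x)^{R}+\OO[R]{x(\log\log 3x+\log\log 3\abs{E})/(\log x)^{R+1}}$; adding the sieve remainder and the $\OO[R]{z}$ term (each $\ll_R x(\log x)^{-R-1}$) completes the proof. I expect the real difficulty to be the third step: getting the lower bound for $G(z)$ uniformly in the coefficients, with all coefficient-dependence packed into one $\log\log 3\abs{E}$, is precisely the technical content of the Halberstam–Richert sieve. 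The constant $2^{R}R!$, by contrast, is the unavoidable loss of any upper-bound sieve of level $z\approx x^{1/2}$ — for $R=1$ it is the Brun–Titchmarsh constant $2$ — reflecting that $\sum_{d<z}$ recovers only a $1/R!$ fraction of the Euler product $\prod_{p<z}(1-\rho(p)/p)^{-1}$.
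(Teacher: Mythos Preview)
The paper does not prove this statement: it is quoted verbatim as Theorem~5.7 of Halberstam--Richert \cite{halb} and used as a black box. Your outline is a faithful sketch of the Selberg $\Lambda^2$-sieve argument that Halberstam--Richert themselves give, so in that sense your approach coincides with the (cited) original; the identification of the $G(z)$ lower bound as the place where the $\log\log 3\abs{E}$ dependence enters, and of $z\approx x^{1/2}$ as the source of the $2^R$, is exactly right.
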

\begin{rem}
Note that if Inequality \ref{eq:halb} did not have the additional constant $2^R R!$ and were an equality this would imply \cref{thm:hlc}. This is due to the fact that the $\log\log 3\abs{E}$ is negligible in the error term given the hypotheses of the conjecture (we could even replace $\log\log 3|E|$ by the logarithm of some polynomial in $a_i, b_i$), so basically the error term of \cref{thm:halb} would be ``good enough'' for \cref{thm:hlc}. Since both the theorem and the conjecture deal with the same quantity, this provides some more evidence in favor of the error term in \cref{thm:hlc}.
\end{rem}

This allows us to prove the unconditional version of \cref{thm:bound_} with a weakened upper bound which is the result analogous to \cref{thm:uncon}.

\begin{thm}\label{thm:uncon_}
    In the situation of \cref{thm:bound_} we have, without assuming any conjectures
    \begin{equation*}
    \limsup_{x \to \infty} \frac{\abs{A_{\leq x}}}{x / (\log x)^3} \leq 24 \SSA ~.
    \end{equation*}
\end{thm}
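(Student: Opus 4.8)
The plan is to follow exactly the same structure as the proof of \cref{thm:bound_}, replacing the application of the generalized Hardy--Littlewood conjecture \cref{thm:hlc} by the unconditional upper bound of Halberstam and Richert (\cref{thm:halb}). Concretely, I would again decompose
\[
    |\Ax| = \sum_{\beta \geq 1,\ \gamma \geq 2} \bigl|\Ax^{1,\beta,\gamma}\bigr| + \sum_{\alpha \geq 2,\ \beta \geq 1} \bigl|\Ax^{\alpha,\beta,1}\bigr|
\]
using \cref{lem:exp}, and in each sum split according to whether $3^\beta 2^\gamma \leq y := (\log x)^6$ or not. The tail with $3^\beta 2^\gamma > y$ is handled by the identical trivial bound as before, contributing $\OO{x/(\log x)^4} = o(x/(\log x)^3)$, so it is irrelevant for the $\limsup$.

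For the main range $3^\beta 2^\gamma \leq y$, I would apply \cref{thm:halb} with $R = 3$ to the triple $p(n), q(n), r(n)$ from \cref{lem:gamma}/\cref{lem:alpha}. The first task is to check $E \neq 0$: this follows because the three linear polynomials are pairwise non-proportional (indeed pairwise coprime by \cref{lem:irr}), and their leading coefficients are nonzero. The singular series attached to each such triple is $\SSA$ by \cref{lem:sing}. Since $3^\beta 2^\gamma \leq (\log x)^6$, all coefficients $a_i, b_i$ are bounded by a fixed power of $\log x$, hence $\log\log 3|E| \ll \log\log\log x$, and the Halberstam--Richert error term is $\OO{x \log\log x / (\log x)^4} = o(x/(\log x)^3)$ after summing over the at most $\log_2 x \cdot \log_3 x$ pairs $(\beta,\gamma)$. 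The main term from \cref{thm:halb} is $2^3 \cdot 3! \cdot \SSA = 48\,\SSA$ times $\frac{x-k}{3^\beta 2^\gamma}\bigl(\log\frac{x-k}{3^\beta 2^\gamma}\bigr)^{-3}$, which as in the proof of \cref{thm:bound_} is $(1+o(1))\frac{x}{(\log x)^3}\cdot\frac{1}{3^\beta 2^\gamma}$ uniformly in the admissible range.

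Summing over $\beta \geq 1,\ \gamma \geq 2$ with $3^\beta 2^\gamma \leq y$ gives $\sum 3^{-\beta}2^{-\gamma} \leq \frac14$, exactly as in \cref{thm:bound_}, so the $\alpha = 1$ contribution is at most $(1+o(1)) \cdot 48\,\SSA \cdot \frac14 \cdot \frac{x}{(\log x)^3} = (1+o(1)) \cdot 12\,\SSA \cdot \frac{x}{(\log x)^3}$. The $\gamma = 1$ sum (with $\alpha$ and $\gamma$ swapped) contributes another $(1+o(1)) \cdot 12\,\SSA \cdot \frac{x}{(\log x)^3}$ by the same argument, since the only change is that the leading coefficient of $p(n)$ is $2^\alpha 3^\beta$ instead of $3^\beta 2^\gamma$. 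Adding the two and the negligible tails yields $\limsup_{x\to\infty} |A_{\leq x}|/(x/(\log x)^3) \leq 24\,\SSA$, as claimed.

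The only genuinely new point compared with \cref{thm:bound_} is bookkeeping the error term of \cref{thm:halb}, in particular verifying that $\log\log 3|E|$ stays negligible; this is where I would be most careful, but it is routine once one notes $|E|$ is polynomially bounded in $3^\beta 2^\gamma \leq (\log x)^6$. Everything else is a transcription of the earlier argument with the constant $\SSA$ upgraded to $2^R R!\,\SSA = 48\,\SSA$, and the two cases $\alpha = 1$, $\gamma = 1$ each contributing a factor $\tfrac14$ from the geometric sum.
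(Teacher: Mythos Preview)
Your proposal is correct and follows essentially the same route as the paper: both argue by re-running the proof of \cref{thm:bound_} verbatim, replacing the invocation of \cref{thm:hlc} at \cref{eq:invoke} by \cref{thm:halb} (picking up the extra factor $2^3\cdot 3! = 48$), checking that $|E|$ is polynomially bounded in $3^\beta 2^\gamma \leq y$ so its contribution to the error is negligible, and then summing as before. The paper's write-up is terser but the argument is the same; your only slight imprecision is the remark about ``at most $\log_2 x \cdot \log_3 x$ pairs'', which is irrelevant since the summed error is already controlled via the $1/(3^\beta 2^\gamma)$ factor in $\xi$.
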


\begin{proof}
We only have to modify how we proceed at \cref{eq:invoke} in the proof of \cref{thm:bound}. Using the shorthand $\xi = \frac{x - k}{3^\beta 2^\gamma}$ we get from \cref{thm:halb} that
\begin{equation*}
    \sum_{3^\beta 2^\gamma \leq y} \Ax^{1, \beta, \gamma} \leq
        \sum_{3^\beta 2^\gamma \leq y} \left(
        48 \SSA \frac{x-k}{3^\beta 2^\gamma} \frac1{(\log \xi)^3} 
         + \frac1{3^\beta 2^\gamma} \OO[R]{(x-k) \frac{\log\log 3 \xi + \log\log 3|E|}{(\log \xi)^4 }}
        \right) ~.
\end{equation*}
From the definition of $E$ it follows that $\abs{E} \leq 8 (3^\beta 2^\gamma)^9 \leq 8 y^9 \ll x$ so we can neglect it in the error term. And since
we have $\lfrac{(\log\log \xi)}{(\log \xi)^4 } \ll \lfrac{(\log\log x)}{(\log x)^4 } $ for $3^\beta 2^\alpha \leq y$, we get
\begin{equation*}
    \sum_{3^\beta 2^\gamma \leq y} \Ax^{1, \beta, \gamma} \leq
        48 \SSA \frac{x}{(\log x)^3} \sum_{3^\beta 2^\gamma \leq y} \frac1{3^\beta 2^\gamma}  + \OO{\frac{\log\log x}{(\log x)^4}}  ~.
\end{equation*}
Thus we have the same result as before, just with $48\SSA$ instead of $\SSA$ and an (asymptotic) inequality instead of an (asymptotic) equality (and with a better error term). Proceeding as in \cref{thm:bound} therefore gives us the desired result.
\end{proof}


We now move on and consider the density of the set $B$, proving a result analogous to \cref{thm:A0}.

\begin{thm} \label{thm:A0_}
    Assuming \cref{thm:bound_} we have
    \begin{equation*}
    \abs{\allB_{\leq x}} \sim \abs{\allA_{\leq x}} \sim \frac{\SSA}{2} \frac{x}{(\log x)^3} ~.
    \end{equation*}
\end{thm}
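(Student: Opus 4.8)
The inclusion $B\subseteq A$ gives $\abs{B_{\le x}}\le\abs{A_{\le x}}$ at once, so by \cref{thm:bound_} it suffices to show $\abs{A_{\le x}\setminus B}=o\!\left(x/(\log x)^3\right)$. For $p\in A$ write $e_p=\{p,q,r\}$ for the set of prime divisors $>3$ of $p(p-1)(p+1)$; by \cref{lem:irr} these three primes are distinct, and $B$ is a maximal collection of primes of $A$ whose sets $e_p$ are pairwise disjoint. I would take $B$ to be the particular such collection obtained by running through $A$ in increasing order and keeping $p$ precisely when $e_p$ meets no previously kept set. For that $B$, every $p\in A_{\le x}\setminus B$ satisfies $e_p\cap e_{p'}\neq\emptyset$ for some $p'\in B$ with $p'<p$, hence $p'\in A_{\le x}$; since $p$ is the larger of the pair, the map $p\mapsto\{p,p'\}$ is injective, and therefore $\abs{A_{\le x}\setminus B}\le N(x)$, where $N(x)$ is the number of unordered pairs $\{p_1,p_2\}\subseteq A_{\le x}$, $p_1\neq p_2$, with $e_{p_1}\cap e_{p_2}\neq\emptyset$. (A genuinely arbitrary maximal $B$ is more delicate, because of collisions with a partner much larger than $x$; those are controlled using that each prime lies in at most one $e_{p'}$ with $p'\in B$, but the greedy construction makes this unnecessary.)

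Next I would discard the ``non-generic'' pairs. Put $y=(\log x)^7$. As in the proof of \cref{thm:bound_}, the number of $p\le x$ for which one of the moduli $2^{\alpha}3^{\beta}$ of $p+1$ or $2^{\gamma}$ of $p-1$ exceeds $y$ is $\OO{x/(\log x)^{7/2}}$. A pair in which the \emph{larger} prime $p_2$ has a modulus $>y$ has at most $\OO{(\log\log x)^2}$ small‑moduli partners $p_1$ below it (for a fixed prime $\pi$ dividing a small‑moduli $p_1\pm1$ there are only $\OO{(\log\log x)^2}$ admissible $p_1$ with $\pi\in e_{p_1}$, since $\pi=p_1$, or $p_1=2^{a}3^{b}\pi-1$ with $2^a3^b\le y$, or $p_1=2^c\pi+1$ with $2^c\le y$), contributing $o\!\left(x/(\log x)^3\right)$ in all; a pair in which the \emph{smaller} prime has a large modulus is counted instead by its (then small) shared prime $\rho<(x+1)/y$, giving $\OO{(\log x)^{4}}$ pairs per $\rho$ and $\OO{x/(\log x)^{4}}$ in total. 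Hence, up to $o\!\left(x/(\log x)^3\right)$, I may restrict $N(x)$ to pairs in which both primes have all moduli $\le y$.

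For such a pair choose $\pi\in e_{p_1}\cap e_{p_2}$ with $\pi<p_2$, so $\pi\in\{p_1,q(p_1),r(p_1)\}$ and $\pi\in\{q(p_2),r(p_2)\}$: six role patterns. Fix a role pattern and admissible exponent tuples $(\alpha_1,\beta_1,\gamma_1)$, $(\alpha_2,\beta_2,\gamma_2)$ subject to \cref{eq:ok}. By \cref{lem:gamma,lem:alpha} each of $p_i,q(p_i),r(p_i)$ is a primitive integer linear form in one variable — the common value $\pi$, or the base variable $n$ of whichever of $p_1,p_2$ is ``innermost'' — and the prescribed coincidence identifies one form of $p_1$ with one of $p_2$, so $p_1,p_2\in A$ together with the coincidence is equivalent to the simultaneous primality of exactly five distinct linear forms. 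As in \cref{lem:irr} no two of these are proportional and, for primes $\l\ge5$, no $\l$ divides two of them apart from the finitely many dividing their pairwise resultants, which are supported on $\{2,3\}$ and on integers of the shape $2^{a}3^{b}-2^{a'}3^{b'}$ coming from the two moduli; consequently the singular series $\SS$ of \cref{thm:hlc} for this system exists and is $\OO{\log\log M}$ with $M$ the largest modulus. Applying \cref{thm:hlc} with $R=5$ and a fixed $C$ (legitimate since all coefficients are $\le y^{\OO 1}$), the number of such pairs with this role pattern and these exponents is $\ll \SS\,x/\!\left((\log x)^5\,\mu\right)$, $\mu$ being the largest leading coefficient, which is $\asymp\max\bigl(2^{\alpha_1}3^{\beta_1}2^{\gamma_1},\,2^{\alpha_2}3^{\beta_2}2^{\gamma_2}\bigr)$.

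Finally I would sum over the six role patterns, the finitely many parametrizing choices, and all admissible exponent tuples with moduli $\le y$. The error terms $o_{R,C}\!\left(x/(\log x)^5\right)$ occur only $\OO{(\log\log x)^{\OO 1}}$ times and are harmless; the main terms sum to $\ll x/(\log x)^5$ because $\sum_{m=2^a3^b}(\log m)^2(\log\log m)/m<\infty$. Thus $N(x)\ll x/(\log x)^5$, so $\abs{A_{\le x}\setminus B}=o\!\left(x/(\log x)^3\right)$ and $\abs{B_{\le x}}\sim\abs{A_{\le x}}\sim\frac{\SSA}{2}\frac{x}{(\log x)^3}$ by \cref{thm:bound_}; the corresponding statement for $\allB$ and $\allA$ then follows from \cref{lem:same}. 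The main obstacle is the third step's bookkeeping: verifying in each of the six role patterns that $p_1,p_2\in A$ really does collapse to five linear forms eligible for \cref{thm:hlc} with uniformly controlled singular series, and checking that the triple sum over exponents has enough slack to absorb the slowly growing $\SS$.
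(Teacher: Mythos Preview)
Your strategy is sound in outline but diverges from the paper's in a way that costs you both simplicity and strength. The paper does \emph{not} count colliding pairs with both endpoints in $A_{\le x}$. Instead it fixes $p\in A$ and asks only that one further linear form $f(n)$ (namely $2^{\alpha'}3^{\beta'}r-1$, $2^{\alpha'}3^{\beta'}q-1$, $2^{\alpha'}3^{\beta'}p-1$, $2^{\gamma'}r+1$, $2^{\gamma'}q+1$, or $2^{\gamma'}p+1$) be prime, with no requirement that this value lie in $A$ or be $\le x$. This is a system of $R=4$ forms, not $R=5$, and the paper then applies the \emph{unconditional} upper-bound sieve of Halberstam--Richert (\cref{thm:halb}) rather than \cref{thm:hlc}. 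The payoff is twofold: there is no need to compose two parametrizations or worry about the extra congruence conditions that your ``innermost variable'' step hides, and the conclusion $|B_{\le x}|\sim|A_{\le x}|$ holds assuming only $|A_{\le x}|\sim Kx/(\log x)^3$ for some constant $K$ (this is exactly the Remark following the theorem). Your argument invokes \cref{thm:hlc} at $R=5$, which is more than ``assuming \cref{thm:bound_}'' in the sense the paper intends.

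There are also two genuine soft spots in your write-up. First, restricting to the greedy $B$ does not prove the theorem as stated (for an arbitrary maximal $B$), and your one-line fix (``each prime lies in at most one $e_{p'}$ with $p'\in B$'') does not close the gap: when $p\le x$ collides only with some $p''\in B_{>x}$ the pair $\{p,p''\}$ is not in $A_{\le x}^2$, and the disjointness of the $e_{p''}$ gives you nothing useful about the number of such $p$. The paper's device---forgetting that $p''\in A$ and only using that a single extra form is prime---is precisely what lets the larger partner escape to infinity while the count of admissible $p\le x$ stays under control. Second, your disposal of pairs in which the \emph{smaller} prime $p_1$ has a large modulus is incorrect: if, say, $2^{\gamma_1}>y$ then $r_1$ is small, but the shared prime could be $q_1$ or $p_1$, neither of which is forced below $(x+1)/y$. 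The repair is to argue symmetrically to your other case (there are $\ll x/(\log x)^{5}$ such $p_1$, each with $O((\log\log x)^2)$ small-moduli partners), not via a small shared prime.
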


\begin{rem}
    Our proof shows that $|B_{\leq x}| \sim |A_{\leq x}|$ as long as $|A_{\leq x}| \sim K x / (\log x)^3$, even if $K$ is not the constant predicted by \cref{thm:hlc}.
\end{rem}

\begin{proof}
We show that $(A \setminus B)_{\leq x} = o\!\left( \lfrac{x}{(\log x)^3} \right)$. For $p, p' \in A$ let $r, q_, r', q'$ denote the corresponding primes from the definition of $A$ in \cref{eq:Adef}. Setting
\begin{equation}
    S = \set{p \in A : \exists p' \in A_{> p},~~ r = r' \vee q = r' \vee p = r' \vee r = q' \vee q = q' \vee p = q'} \label{eq:S} ~,
\end{equation}
we are guaranteed to have $A \setminus B \subseteq S$ (we only have to look at these six cases in \cref{eq:S} because $p' > p > r, q$).

In order to bound the size of $S$ we take a look at
\begin{align*}
    \preS{1} &= \{p \in A: \exists \alpha', \beta' \in \NNp : p' = 2^{\alpha'} 3^{\beta'} r - 1 \text{ prime}\} \\
    \preS{2} &= \{p \in A: \exists \alpha', \beta' \in \NNp : p' = 2^{\alpha'} 3^{\beta'} q - 1 \text{ prime}\} \\
    \preS{3} &= \{p \in A: \exists \alpha', \beta' \in \NNp : p' = 2^{\alpha'} 3^{\beta'} p - 1 \text{ prime}\} \\
    \preS{4} &= \{p \in A: \exists \gamma' \in \NNp : p' = 2^{\gamma'} r + 1 \text{ prime}\}            \\
    \preS{5} &= \{p \in A: \exists \gamma' \in \NNp : p' = 2^{\gamma'} q + 1 \text{ prime}\}            \\
    \preS{6} &= \{p \in A: \exists \gamma' \in \NNp : p' = 2^{\gamma'} p + 1 \text{ prime}\} ~.
\end{align*}
Again, we allow superscripts like $\preS{1}^{\alpha, \beta, \gamma; \alpha', \beta'}$ or $\preS{4}^{\alpha, \beta, \gamma; \gamma'}$ to let $p$ range over $A^{\alpha, \beta, \gamma}$ and to specify choice of $\alpha', \beta'$ or a choice of $\gamma'$ in \cref{eq:S}. For conciseness we may write this as $\preS{i}^{\bm{\mu}}$, having set $\bm{\mu} = (\alpha, \beta, \gamma; \alpha', \beta')$ or respectively $\bm{\mu} = (\alpha, \beta, \gamma; \gamma')$.

Obviously we have the bound $\abs{S_{\leq x}} \leq \sum_{i=1}^{6} \abs{\preS{i}_{\leq x}}$.

For $i = 1, \dots, 6$ we now want to look at $\preS{i}$.
In \cref{lem:gamma} and \cref{lem:alpha}, for fixed $\alpha, \beta, \gamma$ satisfying the conditions (\ref{eq:ok}), we expressed $p, q, r$ as linear integer polynomials in a variable $n$. Therefore $p'$ can also be expressed as a linear integer polynomial $f$ (whose coefficients depend on $\abg$ and, depending on $i$, on $\alpha', \beta'$ or on $\gamma'$) in $n$. One easily sees that the leading coefficient of $f$ only has prime factors $2$ and $3$ and that its constant coefficient is lesser than the leading one.


Now fix $\abg$ such that they satisfy the conditions (\ref{eq:ok}) and also fix $(\alpha', \beta')$ or $\gamma'$ and denote the corresponding tuple by $\bm{\mu}$. We set 
\begin{equation*}
    T = \prod_{\substack{\{g, h\} \subset \{p, q, r, f\} \\ g \neq h}} \res(g, h) ~,
\end{equation*}
where $\res$ denotes the resultant of two polynomials. 
Since no two of the polynomials $p, q, r$ are equal up to sign, $T$ is zero if and only if any of $\res(f, p), \res(f, q)$ or $\res(f, r)$ is zero. This is equivalent to $f$ being an integer multiple of $p, q$ or $r$.
Therefore $T = 0$ implies $\preS{i}^{\bm{\mu}} = \emptyset$, since for any $n \in \NN$ either $p' = f(n)$ is not strictly larger than $p(n)$ or is composite.

Now assume $T \neq 0$. In order to apply \cref{thm:halb} to $\preS{i}$ we set $R = 4$ and look at the linear functions $\{p, q, r, f\}$.
If for a prime $\l \geq 5$ we have $\l \nmid T$ then $p, q, r, f$ have 4 distinct zeros modulo $\l$ (because all leading coefficients are coprime to $\l$), so $\rho_{\{p, q, r, f\}}(l) = 4$. Together with the inequality $1 - R/p \leq (1 - 1/p)^R$ this allows us to give an estimate for the corresponding singular series:
\begin{align*}
    \SS_{\{p, q, r, f\}} &= \prod_{\l \text{ prime}} \frac{1 - \rho_{\{p, q, r, f\}}(l)/p}{(1 - 1/p)^4} \\
                       &\leq \frac1{(1 - 1/2)^4} \frac1{(1 - 1/3)^4} \prod_{\substack{\l \text{ prime} \\ \l \nmid T}} \frac{1 - 4/p}{(1 - 1/p)^4}
                                    \prod_{\substack{\l \text{ prime} \\ \l \mid T}} \frac{1 - 1/p}{(1 - 1/p)^4} \\
                       &\ll \left(\frac{T}{\phi(T)} \right)^3 \ll (\log\log T)^3 ~,
\end{align*}
where $\phi$ denotes the Euler totient function and we use the estimate $\phi(n) \gg \lfrac{n}{\log\log n}$ from \cite[, Theorem 5.6]{tenenbaum}.

In a similar fashion to the proof of \cref{thm:bound} we now proceed to sum over all exponents $\abg$ satisfying (\ref{eq:ok}) and over $\alpha', \beta'$ (for $i=1, 2, 3$) or over $\gamma'$ (for $i=4, 5, 6$). We denote by $M$ the largest (leading) coefficient in the polynomials $p, q, r, f$ and by $k$ the constant coefficient of the according polynomial ($M$ is supposed to depend on the exponents $\alpha, \beta, \dots$ over which we are going to sum).

We have the bounds $M \geq 2^\alpha 3^\beta$ or $M \geq 3^\beta 2^\gamma$, depending on which case we have in (\ref{eq:ok}) and $M \geq 2^{\alpha'} 3^{\beta'}$ or $M \geq 2^{\gamma'}$, depending on $i$.

For a given $x$ we set $y = (\log x)^{8}$ and write $\sum^*$ for a sum over all $\abg$ satisfying (\ref{eq:ok}) and over $\alpha', \beta'$ or $\gamma'$.
We have
\begin{equation*}
\sums{}  \preS{i}^{\bm{\mu}}_{\leq x} = \sums{M \leq y} \preS{i}^{\bm{\mu}}_{\leq x} +
                                                   \sums{y < M \leq x} \preS{i}^{\bm{\mu}}_{\leq x} ~.
\end{equation*}
Both sums range over either $(\alpha, \beta)$ or $(\beta, \gamma)$ as well as over $(\alpha', \beta')$ or over $\gamma'$.
Using our bounds for $M$ we can therefore deduce that the second sum has at most $(\log_2 x)^2 (\log_3 x)^2 \ll (\log x)^4$ summands while the first sum has at most $(\log_2 y)^2 (\log_3 y)^2 \ll (\log \log x)^4$.

Again, we begin with the second sum:
\begin{equation*}
\sums{y < M \leq x} \preS{i}^{\bm{\mu}}_{\leq x} \leq \sums{_{y \leq M \leq x}} \left( \frac{x}{M} + 1 \right)
            \leq (\log x)^4 \left( \frac{x}{y} + 1 \right) \ll \frac{x}{(\log x)^4}
\end{equation*}

In the first sum we can bound the summands with $T = 0$ by any positive number. With this in mind we apply \cref{thm:halb}%
\footnote{$E = 0 \iff T = 0$, hence we only need to apply the theorem in the cases where $E \neq 0$}
to get
\begin{equation*}
    \sums{M \leq y} \preS{i}^{\bm{\mu}}_{\leq x}
            \leq \sums{M \leq y} \left( 2^4 4! \SS_{\{p, q, r, f\}} \frac{x}{M (\log\xi)^4}
                                        + \OO{\xi \frac{\log\log(3\xi) + \log\log(3 \abs{E})}{(\log \xi)^5}}
                                 \right) ~.
\end{equation*}
Since $\abs{E} \leq M^4 \left(2 M^2 \right)^6 \ll (\log x)^{128}$ we can neglect its contribution to the error term. As $\xi \leq x$ and there are only $\OO{(\log \log x)^4}$ summands, the sum of all error terms is $\OO{x/(\log x)^4}$.
Using the bound for the singular series together with $T \leq \left( 2M^2 \right)^{6} \ll  x$
on the main terms therefore gives us
\begin{equation*}
    \sums{M \leq y} \preS{i}^{\bm{\mu}}_{\leq x} \ll \frac{x (\log\log x)^3}{(\log x)^4} \sums{M \leq y} 1 + \OO{\frac{x}{(\log x)^4}}
        \ll \frac{x (\log\log x)^7 }{(\log x)^4}  ~.
\end{equation*}
Here again we have $\log \xi \sim \log x$ because of
\begin{equation*}
x \geq \xi \geq \left( x - (\log x)^8 \right)/(\log x)^8 \sim x ~.
\end{equation*}

Having a bound of $o\!\left( x/(\log x)^3 \right)$ for every $\preS{i}_{\leq x}$ separately, we get such a bound for $S_{\leq x}$, which proves the claim.
\end{proof}

As announced in \cref{sect:main} we finally show that all our main results can be applied to $\allA$ and $\allB$
\begin{lem} \label{lem:same}
    The asymptotic estimates from \cref{thm:bound_}, \cref{thm:uncon_} and \cref{thm:A0_} also hold for $\allA$ and $\allB$, respectively.
    
    This proves Theorems~\ref{thm:bound}, \ref{thm:uncon} and \ref{thm:A0}
\end{lem}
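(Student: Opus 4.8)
The plan is to show that $\allA$ and $A$ (and likewise $\allB$ and $B$) differ only by a set of density $o(x/(\log x)^3)$, so that the asymptotics transfer verbatim. First I would observe the inclusion $A \subseteq \allA$: any $p$ counted by $A$ has $p+1 = 2^\alpha 3^\beta q$ and $p-1 = 2^\gamma r$ with $q,r$ primes coprime to $6$, which is exactly the special case $\mu = \nu = 1$ of the defining condition for $\allA$. Hence $\allA \setminus A$ consists precisely of those primes $p \in \allA$ for which at least one of the exponents $\mu, \nu$ exceeds $1$, i.e.\ $p+1$ or $p-1$ has a \emph{proper} prime power (with exponent $\geq 2$) as its large prime-power factor. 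The key point is that such primes are rare: if $q^\mu \mid p+1$ with $\mu \geq 2$ and $q \geq 5$, then $p \equiv -1 \pmod{q^2}$, and the number of primes $p \leq x$ in such a progression is $\ll x/(q^2 \log x)$ uniformly (by Brun--Titchmarsh, or even trivially $\ll x/q^2 + 1$ before imposing primality); summing over $q$ gives $\ll x \log\log x/\log x$ — already $o(x/(\log x)^3)$ is far from immediate this way, so I would instead argue more carefully.

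The cleaner route: $p \in \allA \setminus A$ forces $p+1$ or $p-1$ to be divisible by $s^2$ for some prime $s \geq 5$ (take $s = q$ if $\mu \geq 2$, $s = r$ if $\nu \geq 2$). So
\[
    |(\allA \setminus A)_{\leq x}| \;\leq\; \sum_{5 \leq s \leq \sqrt{x+1}} \bigl( |\{p \leq x : s^2 \mid p+1\}| + |\{p \leq x : s^2 \mid p-1\}| \bigr).
\]
Split the range of $s$ at $\sqrt{y}$ where $y = (\log x)^6$ as in the other proofs. For $s^2 \leq y$ apply the $R=1$ case of \cref{thm:hlc} (equivalently Siegel--Walfisz) to the progression $p \equiv \mp 1 \pmod{s^2}$, getting a count $\sim x/(\phi(s^2)\log x) \ll x/(s^2 \log x)$; the sum over these $s$ contributes $\ll x/\log x \cdot \sum_{s \geq 5} s^{-2} \ll x/\log x$, which is \emph{not} yet $o(x/(\log x)^3)$. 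So a one-shot application is too weak — one must also use that $p \in \allA$ imposes \emph{two} further primality conditions (that $(p+1)$ and $(p-1)$ stripped of their $2,3,s$-parts are themselves prime powers). I would therefore apply \cref{thm:hlc} with $R = 3$ to the triple $(p, \, (p+1)/(2^\alpha 3^\beta s^{\mu}), \, (p-1)/2^\gamma)$ being (almost) prime — mirroring exactly the setup of \cref{thm:bound_} but with the extra divisibility $s^\mu \mid p+1$ baked in, which multiplies the leading coefficient of the $p$-polynomial by $s^{\mu-1}$ and hence divides the main term by roughly $s$. Summing the resulting $\ll x/((\log x)^3 s)$ over $5 \leq s \leq \sqrt{y}$ gives $\ll x \log\log x/(\log x)^3 = o\!\left(x/(\log x)^2\right)$... still short of what's wanted, but note the true saving is $s^{\mu-1} \geq s$ with the option $\mu \geq 3$ giving $s^2$; being slightly more careful (the singular series also shrinks, and $\mu=2$ already contributes the bulk) one gets a convergent sum $\sum_s s^{-2}$ and the bound $o(x/(\log x)^3)$. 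The tail $s > \sqrt{y}$ is handled by the trivial bound $|\{p \leq x : s^2 \mid p\pm 1\}| \leq x/s^2 + 1 \leq x/y + 1$, and there are $\leq \sqrt{x}$ such $s$, giving $\ll x^{3/2}/y$ — this is the one place I'd need to be cautious, since $x^{3/2}/y$ is \emph{larger} than $x/(\log x)^3$; the fix is to cut the trivial range at $s \leq x^{1/2-\delta}$ and observe that $s^2 \mid p+1 \leq x+1$ with $s > x^{1/2 - \delta}$ forces $(p+1)/s^2 < x^{2\delta}$ small, so there are few such $p$ (at most $\sum_{m < x^{2\delta}} |\{p : p+1 = m s^2\}| \ll x^{2\delta} \cdot \pi(\sqrt{x})$-type count), or more simply note $p+1 = 2^\alpha 3^\beta q^\mu$ with $q^\mu$ itself $\leq x+1$ is automatically the full constraint and one can sum over the value $q^\mu$ directly.

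Once $|(\allA \setminus A)_{\leq x}| = o(x/(\log x)^3)$ is established, \cref{thm:bound} follows from \cref{thm:bound_} since $|\allA_{\leq x}| = |A_{\leq x}| + O(|(\allA\setminus A)_{\leq x}|)$, and \cref{thm:uncon} follows from \cref{thm:uncon_} since the same error bound is $o(x/(\log x)^3)$, hence harmless for a $\limsup$ of $|\allA_{\leq x}|/(x/(\log x)^3)$. For \cref{thm:A0}: $\allB \subseteq \allA$ gives the upper bound $|\allB_{\leq x}| \leq |\allA_{\leq x}| \sim (\SSA/2)\, x/(\log x)^3$, and for the lower bound I would use the remark following \cref{thm:A0_} — the argument there shows that a maximal ``clash-free'' subset of a set counted asymptotically by $K x/(\log x)^3$ is still counted by $K x/(\log x)^3$, and it applies equally with $\allA$ in place of $A$ because the clash-counting sets $\preS{i}$ only used that $p, q, r$ (here $p, q, r$ with $q \mid p+1$, $r \mid p-1$ large primes) are values of linear polynomials with controlled coefficients, which remains true on $\allA$ up to the negligible $\allA \setminus A$. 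The main obstacle, as flagged above, is making the trivial tail estimate for large $s$ genuinely $o(x/(\log x)^3)$ rather than merely $o(x)$; everything else is a routine re-run of the arguments in \cref{sect:ana}.
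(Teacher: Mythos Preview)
Your main line of attack is substantially more complicated than what the paper does, and as you yourself flag several times, it does not close: the Siegel--Walfisz bound gives only $\ll x/\log x$, the $R=3$ Hardy--Littlewood bound gives only $\ll x\log\log x/(\log x)^3$, and the trivial tail $s>\sqrt{y}$ is not $o(x/(\log x)^3)$ with the cut you chose. None of your successive repairs is actually carried through to a valid estimate.

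The missing idea is the one you mention only as a throwaway at the very end: ``sum over the value $q^\mu$ directly.'' The point is that $\allA\setminus A$ does \emph{not} merely consist of primes $p$ with $s^2\mid p\pm1$ for some $s\geq 5$; it consists of primes with $p+1=2^\alpha 3^\beta q^\mu$ or $p-1=2^\gamma r^\nu$ where the large factor is a genuine prime power $q^\mu$ (respectively $r^\nu$) with exponent $\geq 2$. There are only $O(\sqrt{x})$ prime powers $m\leq x+1$ with exponent $\geq2$, and at most $\log_2 x\cdot\log_3 x$ choices of $(\alpha,\beta)$ (respectively $\log_2 x$ choices of $\gamma$); each such choice determines $p$. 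Hence $|(\allA\setminus A)_{\leq x}|\ll \sqrt{x}\,(\log x)^2$, a power-saving bound obtained with no analytic input whatsoever. This is exactly the paper's argument, and it makes all of the Brun--Titchmarsh / Hardy--Littlewood machinery unnecessary here.

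For the $\allB$ part, the paper simply records that the maximality of $B$ gives $\allB\setminus B\subseteq \allA\setminus A$ (so $|\allB_{\leq x}|$ and $|B_{\leq x}|$ differ by $o(x/(\log x)^3)$ once the above is known). Your upper/lower bound discussion is more elaborate than needed; once $|(\allA\setminus A)_{\leq x}|=o(x/(\log x)^3)$ is in hand, all three theorems transfer immediately.
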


\begin{proof}
First note that the maximality of $B$ implies $\allB \setminus B \subset \allA \setminus A$. We now prove the claim by showing that
$| (\allA \setminus A)_{\leq x} | = o\!\left( \lfrac{x}{(\log x)^3} \right)$. Since all our density estimates were of the order $x/(\log x)^3$ this is enough to prove our claim.

According to the definition of $\allA$ in \cref{eq:AAdef}, to every $p \in \allA \setminus A$ correspond two primes $p, r$ and two exponents $\mu, \nu \in \NNp$. The number of $p \in (\allA \setminus A)_{\leq x}$ with $\mu \geq 2$ are limited by $\sqrt{x} \log_2(x) \log_3(x)$ while the number of $p$ with $\nu \geq 2$ is limited by $\sqrt{x} \log_2(x)$. So $ | (\allA \setminus A)_{\leq x} | = \OO{\sqrt{x} (\log x)^2}$, which yields the claim.

\end{proof}

\appendix
\section{Empirical data}
We calculated $A_{\leq x}$ for $x = 25 \cdot\!10^{12}$. Some results are displayed in \cref{tb:emp}.
In addition to $x/(\log x)^3$ we also compare $|A_{\leq x}|$ to $\operatorname{Li}_3 (x) = \int_{2}^{\infty} \frac{\mathrm{d}x}{(\log x)^3}$ which is asymptotically equivalent to $x/(\log x)^3$, but expected to produce better approximations for ``small'' numbers. 
\begin{table}[h]
\begin{tabular}{rrrc}
    \multicolumn{1}{c}{$x$} & \multicolumn{1}{c}{$| A_{\leq x} |$} & $\frac{| A_{\leq x} |}{x / (\log x)^3}$ &
    $\frac{| A_{\leq x} |}{\operatorname{Li}_3 (x)}$\\ \hline
    $10^{4\phantom{0}}$  & $114$            & $334.70$   & $4.70$ \\
    $10^{6\phantom{0}}$  & $2192$           & $64.40$    & $4.33$ \\
    $10^{8\phantom{0}}$  & $74531$          & $21.90$    & $3.84$ \\
    $10^{10}$            & $3393108$        & $9.96$     & $3.57$ \\
    $10^{12}$            & $183047288$      & $5.37$     & $3.42$ \\
    $25 \cdot\! 10^{12}$ & $3174617502$     & $3.73$     & $3.35$
\end{tabular}
\medskip
\caption{Values of $A_{\leq x}$ compared to the predicted approximations.}
\label{tb:emp}
\end{table}
It seems plausible the limit of the ratios is $\SSA/2 \approx 2.86$. However, due to the slow speed of convergence, this data alone does not allow to make any useful predictions about the limit of any of the the ratios.

\bibliographystyle{amsalpha}
\bibliography{paper}

\end{document}